\documentclass[reqno]{amsart}
\usepackage{amsmath,amsfonts,amscd,amssymb,epsfig,euscript}
\usepackage{tikz}
\usetikzlibrary{calc,decorations.markings}
\usepackage{pdfsync}
\usepackage{amsxtra,mathrsfs}
\theoremstyle{plain}
\newtheorem{theorem}{Theorem}
\newtheorem{lemma}{Lemma}[section]
\newtheorem{proposition}{Proposition}

\theoremstyle{definition}

\newtheorem{remark}{Remark}
\numberwithin{equation}{section}
\newcommand{\field}[1]{\ensuremath{\mathbb{#1}}}
\newcommand{\CC}{\field{C}}

\newcommand{\RR}{\field{R}}

\newcommand{\curly}[1]{\mathscr{#1}}

\newcommand{\cD}{\curly{D}}

\newcommand{\cF}{\curly{F}}

\newcommand{\cH}{\curly{H}}

\newcommand{\cU}{\curly{U}}

\newcommand{\NN}{\field{N}}

\newcommand{\vphi}{\varphi}

\DeclareMathOperator{\im}{\mathrm{Im}}

\usepackage{hyperref}
\hypersetup{colorlinks,citecolor=blue,plainpages=false,hypertexnames=false}
\begin{document}
\title[On the spectral theory of one functional-difference operator]{On the spectral theory of one functional-difference operator from conformal field theory}
\author{Ludwig D. Faddeev}
\address{Sankt Petersburg Department of V.A.~Steklov Mathematical Institute \&
Saint Petersburg State University, Saint Petersburg, Russia} 
\email{faddeev@pdmi.ras.ru}
\author{Leon A. Takhtajan}
\address{Department of Mathematics, Stony Brook University, NY, USA \&
Euler International Mathematical Institute, Saint Petersburg, Russia}
\email{leontak@math.sunysb.edu}
\subjclass[2010]{33D05, 34K06, 39A70}
\keywords{Modular quantum dilogarithm, Weyl operators, functional-difference operator, Schr\"{o}dinger operator, Fourier transform, Casorati determinant, Sokhotski-Plemelj formula, scattering solution, Jost solutions, resolvent of an operator, eigenfunction expansion, Kontorovich-Lebedev transform, scattering theory, scattering operator.}
\thanks{The work of the first author (L.F.) was partially supported by the RFBR grants 14-01-00341 and 13-01-12405-ofi-m and by the program ``Mathematical problems of nonlinear dynamics'' of the Russian Academy of Sciences. The work of the second author (L.T.) was partially supported by the NSF, grant DMS-1005769, and he is grateful to A.M. Polyakov for interesting discussions.}
\begin{abstract}
In the paper we consider a functional-difference operator $H=U+U^{-1}+V$, where $U$ and $V$ are self-adjoint Weyl operators satisfying $UV=q^{2}VU$ with $q=e^{\pi i\tau}$ and $\tau>0$. The operator $H$ has applications in the conformal field theory and in the representation theory of quantum groups. Using modular quantum dilogarithm --- a $q$-deformation of the Euler's dilogarithm --- we define the scattering solution and the Jost solutions, derive an explicit formula for the resolvent of the self-adjoint operator $H$ in the Hilbert space $L^{2}(\RR)$, and prove the eigenfunction expansion theorem. The latter is a $q$-deformation of the well-known Kontorovich-Lebedev transform in the theory of special functions. We also present a formulation of the scattering theory for the operator $H$.
\end{abstract}
\maketitle
\section{Introduction}
The quantum mechanics gave a powerful impetus for the development of the spectral theory of differential operators. In particular, various spectral problems for the Schr\"{o}dinger operator were studied very extensively. Thus in classic papers of I.M. Gelfand, M.G. Krein, B.M. Levitan, V.A. Marchenko and A.Ya. Povzner in the 50s of the last century there have been studied direct and inverse scattering problems for the  Schr\"{o}dinger operator (see surveys  \cite{F59,F74} and references therein). The fundamental role of these papers in the development of the classical integrable systems is well-known.
 
 Formulated in the 80s two-dimensional conformal field theory \cite{BPZ} has stimulated further development of the representation theory of the infinite-dimensional Lie groups and algebras. One of fundamental models of the conformal field theory is the quantum Liouville model, whose discrete version was considered by us more than 30 years ago (see published in 1986 lecture \cite{FT1}). Namely, in the explicit construction of the $L$-operator in \cite{FT1}, the quantum group $\mathrm{SL}_{q}(2,\RR)$ was first introduced. The matrix trace of the $L$-operator --- the functional-difference operator $H$ --- plays important role in the quantization of the Teicm\"{u}ller space \cite{FC,K1} and in the representation theory of the non-compact quantum group $\mathrm{SL}_{q}(2,\RR)$ \cite{PT}. In the notation of Section \ref{W} this operator has the form $H=U+U^{-1}+V$, and acts on functions $\psi(x)$ on the real line by the formula н$$(H\psi)(x)=\psi(x+2\omega')+\psi(x-2\omega') + e^{\frac{\pi i x}{\omega}}\psi(x).$$
Here $\omega$ are $\omega'$ pure imaginary with positive imaginary parts, and the function $\psi(x)$ is assumed analytic in the strip  $|\im z|\leq 2|\omega'|$, $z=x+iy$ (see Sections \ref{W} and \ref{Def} for precise definitions). The operator $H$ is closely related to the representation theory of the quantum group  $\mathrm{SL}_{q}(2,\RR)$ with $q=e^{\pi i\tau}$, where $\tau=\omega'/\omega>0$ (see\cite{PT,FD}). 

The eigenvalue problem for the operator $H$ has the form
\begin{equation} \label{ev-d}
\psi(x+2\omega',\lambda)+\psi(x-2\omega',\lambda) + e^{\frac{\pi i x}{\omega}}\psi(x,\lambda)=\lambda\psi(x,\lambda),
\end{equation}
and is a functional-difference analog of the Schr\"{o}dinger operator with the potential that exponentially decays as  $x\rightarrow-\infty$ and exponentially grows as $x\rightarrow\infty$. Its continuous limit is the equation
\begin{equation}\label{ev-c}
-\tilde{\psi}''(x,\lambda)+e^{2x}\tilde\psi(x,\lambda)=\lambda\tilde\psi(x,\lambda)
\end{equation}
for the modified Bessel functions of the variable $e^{x}$.

 In \cite{K1} the eigenfunction expansion theorem for the operator $H$ in the momentum representation was formulated as formal completeness and orthogonality relations in the distributional sense. The detailed derivation of these relations using the properties of the modular quantum dilogarithm (see Section \ref{Dilog}) was given in \cite{FD}. Nevertheless, the spectral theory of the operator $H$  as unbounded self-adjoint operator on the Hilbert space $L^{2}(\RR)$ has not been considered in the literature.

In the present paper we fill this gap and give a complete analytic study of the functional-difference operator $H$. Namely,  we define the scattering solution and the Jost solutions for equation \eqref{ev-d}, present an explicit formula for the resolvent of the self-adjoint operator $H$ on $L^{2}(\RR)$, and prove the eigenfunction expansion theorem. We also give a formulation of the scattering theory for the operator $H$.

Now let us discuss the content of the paper in more detail. In Section \ref{neccessary} we collect all necessary concepts and notation. Specifically, in Section \ref{W} we define a Weyl pair  $U,V$ of unbounded self-adjoint operators on  $L^{2}(\RR)$ satisfying the relation  $UV=q^{2}VU$, and in Section \ref{Dilog} we present the properties of the modular quantum dilogarithm  $\gamma(z)$, which is a  $q$-deformation of the Euler's dilogarithm and is expressed through the ration of Alekseevski-Barnes double gamma functions. 

In Section \ref{free} we investigate the `free' operator $H_{0}$, formally given by the expression
$$(H_{0}\psi)(x)=\psi(x+2\omega')+\psi(x-2\omega').$$
Thus in Section \ref{Def-0} we define  $H_{0}$ as unbounded self-adjoint operator on $L^{2}(\RR)$ with the domain $D(H_{0})$ and with the absolutely continuous spectrum of multiplicity two filling $[2,\infty)$. In Section \ref{res-free}, using the Fourier transform, we give an explicit expression for the resolvent $R_{0}(\lambda)$ of the operator $H_{0}$ as the integral operator with  the integral kernel $R_{0}(x-y;\lambda)$,      given by formula \eqref{R-0-formula}. Note that unlike the case of  the Schr\"{o}dinger operator, where the resolvent kernel is given by the variation of parameters method using a simple formula  $\theta'(x)=\delta(x)$, where  $\theta(x)$ is the Heaviside function, in the case of the functional-difference operator  $H_{0}$ the main equation for the resolvent
$$R_{0}(x+2\omega';\lambda)+R_{0}(x-2\omega';\lambda)-\lambda R_{0}(x;\lambda)=\delta(x)$$
holds due to the Sokhotski-Plemelj formula
$$\frac{1}{2\pi i}\left(\frac{1}{x-i0}-\frac{1}{x+i0}\right)=\delta(x)$$
from the theory of distributions.

In Section \ref{Def} we study the operator $H$.  Namely, in Section \ref{Kashaev}, following Kashaev \cite{K1} we consider the Fourier transform of equation \eqref{ev-d}, the functional-difference equation of the first order  \eqref{e.v.-m} and its special solution $\hat{\vphi}(p,k)$. It is expressed through the modular quantum dilogarithm, where it is convenient to use the paramet\-rization $\lambda=2\cosh\!\left(\frac{\pi ik}{\omega}\right)$. In Section \ref{scatt} we define a solution $\varphi(x,k)$ of equation \eqref{ev-d} as the inverse Fourier transform of the solution $\hat{\vphi}(p,k)$. In Lemma \ref{phi-properties} we collect necessary properties of the solution $\vphi(x,k)$, which show that it plays the role of the scattering solution of equation \eqref{ev-d}. In particular, for real $x$ and $k$ the solution $\vphi(x,k)$ exponentially decays as $x\rightarrow\infty$ and oscillates as  $x\rightarrow-\infty$. Moreover,  $\vphi(x,k)$ is an entire function of the variable $x$ and
analytically continues to the strip $0<\im k\leq |\omega|$, which corresponds to the values   $\lambda\in \CC\setminus[2,\infty)$.
In Section \ref{Jost} we introduce the Jost solutions  $f_{\pm}(x,k)$ of equation \eqref{ev-d} as the solutions having for real $k$ the asymptotics
$$f_{\pm}(x,k)=e^{\pm2\pi ikx} + o(1)\quad\text{as}\quad x\rightarrow-\infty.$$

It should be noted that unlike differential equation \eqref{ev-c}, which has two linear independent solutions, functional-difference equation \eqref{ev-d} has infinite dimensional space of solutions since one can multiply a solution by a \emph{quasi-constant}  --- holomorphic  $2\omega'$-periodic function of $x$. Therefore determining the Jost solutions is rather nontrivial. Using the similarity with equation \eqref{ev-c} (see Remarks \ref{K-bessel} and \ref{I-bessel}), we define the Jost solutions $f_{\pm}(x,k)$ by integral representation \eqref{f-coor}. Properties of the Jost solutions $f_{\pm}(x,k)$ are given in Lemma \ref{f-properties}. In particular, they admit analytic continuation 
to the strip $0<\im k\leq |\omega|$ and
$$\vphi(x,k)=M(k)f_{+}(x,k)+M(-k)f_{-}(x,k),$$
where the function $M(k)$ is analytic in the strip $0\leq\im k\leq |\omega|$ and is given by explicit formula through the modular quantum dilogarithm  (see Lemma \ref{phi-properties}).

In Section \ref{resolvent-H}, using analytic properties of the solutions $\vphi(x,k)$ and $f_{\pm}(x,k)$, we show that the resolvent  $R_{\lambda}(H)=(H-\lambda I)^{-1}$ of the operator $H$ is defined for $\lambda\notin [2,\infty)$ and is a bounded integral operator on  $L^{2}(\RR)$ with the integral kernel $R(x,y;\lambda)$ given by explicit formula \eqref{R-kernel-2} (see Proposition \ref{resolv}). In Section \ref{expansion} we prove the eigenfunction expansion theorem for scattering solutions of the operator $H$. Namely, computing the jump of the resolvent across the continuous spectrum, we prove (see Theorem  \ref{theorem-1}) that the operator 
$$(\cU \psi)(k)=\int_{-\infty}^{\infty}\psi(x)\vphi(x,k)dx$$
determines the isometric isomorphism of the Hilbert spaces  $L^{2}(\RR)$ and $\cH_{0}=L^{2}([0,\infty),\rho(k)dk)$,
where
$$\rho(k)=\frac{1}{M(k)M(-k)}=4\sinh\!\left(\frac{\pi ik}{\omega}\right)\sinh\!\left(\frac{\pi ik}{\omega'}\right)$$
is the spectral function of the operator $H$.  Here the operator $\cU H\cU^{-1}$ is a multiplication by the function $\lambda=2\cosh\!\left(\frac{\pi ik}{\omega}\right)$ operator on the space $\cH_{0}$, so that $H$ has simple absolutely continuous spectrum filling $[2,\infty)$. 

Comparison of equations  \eqref{ev-d} and \eqref{ev-c} shows (see Remark \ref{K-L}) that the eigenfunction expansion for the operator $H$ is a $q$-analog of the Kontorovich-Lebedev transform, well-known in the theory of Bessel functions. In Section \ref{scattering} we give a formulation of the scattering theory for the operator  $H$ and show that the scattering operator is the operator of multiplication by the function
$$S(k)=\frac{M(-k)}{M(k)}.$$
\section{Basic concepts and notations} \label{neccessary}
Here we present necessary  concepts and notation.
\subsection{Weyl operators} \label{W}
Let $L^{2}(\RR)$ be a Hilbert space of functions, square integrable over the real axis with respect to the Lebesgue measure. Weyl operators in quantum mechanics are unitary on $L^{2}(\RR)$ operators $U(u)$ and $V(v)$ where $u,v\in\RR$,  defined by the formulas 
$$(U(u)\psi)(x)=\psi(x-u),\quad (V(v)\psi)(x)=e^{-ivx}\psi(x),\quad \psi\in L^{2}(\RR)$$
(see, \emph{e.g.}, \cite[Ch.~2]{T1}, where we put the Planck constant $\hbar=1$).
The operators $U(u)$ and $V(v)$ satisfy Hermann Weyl commutation relations 
$$U(u)V(v)=e^{iuv}V(v)U(u).$$
In the representation theory of the quantum group $\mathrm{SL}_{q}(2,\RR)$ it is necessary to use complex $u$ and $v$ so that the Weyl operators $U(u)$ and  $V(v)$ become unbounded self-adjoint operators on $L^{2}(\RR)$. Namely, using classic Weierstrass notation, denote by $2\omega$, $2\omega'$ the generators of a lattice in $\CC$ with the condition that $\im\,\tau>0$, where $\displaystyle{\tau=\frac{\omega'}{\omega}}$, and put $q=e^{\pi i\tau}$. In the representation theory of the quantum group $\mathrm{SL}_{q}(2,\RR)$ the key role is played by the operators $U$ and $V$, formally defined by 
\begin{equation} \label{U-V}
(U\psi)(x)=\psi(x+2\omega'), \quad (V\psi)(x)=e^{\frac{\pi i x}{\omega}}\psi(x)
\end{equation}
and satisfying the relation
\begin{equation} \label{U-V-q}
UV=q^{2}VU
\end{equation}
on the common domain of $U$ and $V$. Especially interesting is the representation theory of the quantum group $\mathrm{SL}_{q}(2,\RR)$ which corresponds to the cases  $q\in\RR$ and $|q|=1$. In the first case $\omega'\in i\RR$, $\omega\in\RR$ and $0<q<1$, which corresponds to the rectangular period lattice, whereas in the second case the half-periods $\omega$, $\omega'$ are pure imaginary and theory of elliptic functions breaks down.

It is the latter case that arises in the application to the conformal field theory, and here we consider the Weyl pair   
$U, V$ with $|q|=1$. This corresponds to the case when the half-periods $\omega$, $\omega'$ are pure imaginary with positive imaginary parts.
It is convenient to use a normalization
\begin{equation} \label{norm}
\omega\omega'=-\frac{1}{4},
\end{equation}
which is assumed to hold throughout the paper. In the literature on the quantum Liouville theory, it is customary, following the work of A.B. and Al.B. Zamolodchikovs \cite{ZZ-1}, to use a parametrization $\omega=\frac{i}{2b}, \omega=\frac{ib}{2}$, where $\tau=b^{2}$ and $b>0$. 

The operators $U$ and $V$ defined by  \eqref{U-V} are unbounded self-adjoint operators on  $L^{2}(\RR)$. This follows from the general spectral theorem of von Neumann, since they are real-valued functions of the self-adjoint operators $P$ and $Q$ in quantum mechanics, $U=e^{2i\omega'\!P}$ and $V=e^{\frac{\pi i Q}{\omega}}$, where $\displaystyle{P=-i\frac{d}{dx}}$, and  $Q$ is the operator of multiplication by the independent variable $x$. 

The Weyl operators $U$ and $V$ can be also defined directly. Namely, $U$ is a self-adjoint operator in $L^{2}(\RR)$ with the domain 
$$D(U)=\{\psi(x)\in L^{2}(\RR) : e^{-\frac{\pi i p}{\omega}}\hat{\psi}(p)\in L^{2}(\RR)\},$$ 
where
$$\hat{\psi}(p)=\cF(\psi)(p)=\int_{-\infty}^{\infty}\psi(x)e^{-2\pi ipx}dx$$
is the Fourier transform\footnote{We use the normalization of the Fourier transform adopted in the analytic number theory.} in $L^{2}(\RR)$.  
Equivalently, the domain $D(U)$ consists of functions $\psi(x)$ which admit analytic continuation into the strip $\{z=x+iy\in\CC: 0< y <2|\omega'|\}$ such that  $\psi(x+iy)\in L^{2}(\RR)$ for all $0\leq y<2|\omega'|$ and the following limit 
$$\psi(x+2\omega'-i0)=\lim_{\varepsilon\rightarrow 0^{+}}\psi(x+2\omega'-i\varepsilon)$$
exists in a sense of the convergence in $L^{2}(\RR)$. Here for $\psi\in D(U)$ we have $(U\psi)(x)=\psi(x+2\omega'-i0)$. The domain $D(U^{-1})$ of the inverse operator $U^{-1}$ is defined similarly and  $(U^{-1}\psi)(x)=\psi(x-2\omega'+i0)$. The domain $D(V)$ of the self-adjoint operator $V$ consists of functions $\psi(x)\in L^{2}(\RR)$ such that $e^{\frac{i\pi x}{\omega}}\psi(x)\in L^{2}(\RR)$. 
Therefore we have
$$U^{-1}=\cF^{-1}V\cF,$$
where the inverse Fourier transform is given by
$$\psi(x)=\int_{-\infty}^{\infty}\hat{\psi}(p)e^{2\pi ipx}dp.$$
\begin{remark} \label{dual} In the representation theory important role is played by the modular double of the quantum group
 $\mathrm{SL}_{q}(2,\RR)$, introduced in \cite{Faddeev-1}. Its principal series representations are realized in  $L^{2}(\RR)$ and side by side  with the operators $U$ and $V$ use the dual Weyl operators  $\check{U}$ and $\check{V}$. They satisfy dual to  \eqref{U-V-q} relation
$$\check{U}\check{V}=\check{q}^{2}\check{V}\check{U}, \quad \check{q}=e^{\pi i/\tau}$$
and are given by the formulas
$$(\check{U}\psi)(x)  = \psi(x+2\omega),\quad (\check{V}\psi)(x)  = e^{\frac{\pi i x}{\omega'}}\psi(x),$$
obtained from \eqref{U-V} by interchanging the half-periods $\omega$ and $\omega'$. 
\end{remark}

\subsection{Modular quantum dilogarithm} \label{Dilog}
Put

\begin{equation} \label{dilog}
\gamma(z)=\exp\left\{ -\frac{1}{4}\int_{-\infty}^{\infty}\frac{e^{itz}}{\sin\omega t\sin\omega't}\frac{dt}{t}\right\},
\end{equation}
\noindent
where $|\im z|<|\omega|+|\omega'|$ and contour of integration bypasses the singularity at $t=0$ from above. The function $\gamma(z)$ plays a fundamental role in the definition of the modular double of the quantum group $\mathrm{SL}_{q}(2,\RR)$, given by the first author in \cite{Faddeev-1}. It was later  given a name \emph{modular quantum dilogarithm}. Here the adjective ``modular'' reflects the invariance of the function $\gamma(z)$ under the interchanged of $\omega$ and $\omega'$, i.e., under the change of $\tau$ by $1/\tau$. The term ``quantum dilogarithm'' in the name of  $\gamma(z)$ is related to its asymptotic as  $\tau=b^{2}\rightarrow 0$, which for real $z$ is easy to get from the representation \eqref{dilog},
$$\gamma\left(\frac{z}{b}\right)=\exp\left\{\frac{1}{2\pi\tau}\,\mathrm{Li}_{2}(-e^{-2\pi z})+O(1)\right\}\quad\text{при}\quad \tau\rightarrow 0,$$
where
$$\mathrm{Li}_{2}(z)=\sum_{n=1}^{\infty}\frac{z^{n}}{n^{2}}$$
is the Euler's dilogarithm. 

\begin{remark} The function $\gamma(z)$ has interesting history. It appears in the number theory under the name ``double sine''  \cite{Sh, Kurokawa}, in the theory of quantum integrable systems of Calogero-Moser under the name ``hyperbolic gamma function''  \cite{Ruij}, it plays the role of $S$-matrix in the quantum nonlinear $\sigma$-model \cite{ZZ-2} and appears in form-factors for the quantum Sine-Gordon model \cite{Smirnov}. The function $\gamma(z)$ is expressed through the ratio of gamma-functions of the second order, introduced by Barnes \cite{Barnes} in 1899 and actually investigated earlier in 1889 in V.P. Alekseevski's thesis \cite{Al}.
\end{remark}

We will be using the following properties of the modular quantum dilogarithm (see \cite{K1, Volkov}).
\begin{itemize}
\item[QD1)] The function $\gamma(z)$ admits a meromorphic continuation to the whole complex $z$ plane with the poles at  $z=-(2m+1)\omega - (2n+1)\omega'$ with integer $m,n\geq 0$. Here 
$$\gamma(z-\omega'')=\frac{c}{z} + O(1),\quad\text{as}\quad z\rightarrow 0,$$
where $\omega''=\omega+\omega'$ and
$$ c=\frac{e^{i(\frac{\pi}{4}-\beta)}}{2\pi},\quad \beta=\frac{\pi}{12}\left(\tau+\frac{1}{\tau}\right).$$
\item[QD2)]  The function $\gamma(z)$ satisfies the following difference equations
\begin{align*}
\gamma(z+\omega) &=(1+e^{-\frac{\pi i z}{\omega'}})\gamma(z-\omega), \\
\gamma(z+\omega') &=(1+e^{-\frac{\pi i z}{\omega}})\gamma(z-\omega').
\end{align*}
\item[QD3)]  The reflection formula
$$\gamma(z)\gamma(-z)=e^{i\beta+i\pi z^{2}},$$
so that $\gamma(z)$ has zeros at $z=(2m+1)\omega+(2n+1)\omega'$ with integer $m,n\geq 0$.
\item[QD4)]  The reality property
$$\overline{\gamma(z)}=\frac{1}{\gamma(\bar{z})}.$$ 
\item[QD5)]   
The function  $\gamma(z)$ has the following asymptotic 
$$\gamma(z)=1+o(1)$$
as $z\rightarrow\infty$ such that $|\arg z|<\frac{\pi}{2}-\delta$, uniformly in $z$ for each $0<\delta<\frac{\pi}{2}$. 
\end{itemize}

\section{The operator $H_{0}$} \label{free}
Here we consider the free operator $H_{0}=U+U^{-1}$, which formally acts on functions $\psi(x)$ on the real line by the formula
\begin{equation}\label{H-0}
(H_{0}\psi)(x)=\psi(x+2\omega')+\psi(x-2\omega'),
\end{equation}
where it is understood the the function $\psi(x)$ is analytic in the stirp $|\im z|\leq 2|\omega'|$, $z=x+iy$. Obviously when $2\omega'=ib\rightarrow 0$ the operator $b^{-2}(H_{0}-2I)$ turns into the operator $\displaystyle{-\frac{d^{2}}{dx^{2}}}$.

\subsection{The domain} \label{Def-0} Formula \eqref{H-0} determines unbounded self-adjoint operator  $H_{0}$ on $L^{2}(\RR)$ with the domain  $D(H_{0})$, consisting of functions $\psi(x)$ which admit analytic continuation to the stirp $\{z=x+iy\in\CC: |y| <2|\omega'|\}$ and such that  $\psi(x+iy)\in L^{2}(\RR)$ for all  $|y|<2|\omega'|$ and following limits  
$$\psi(x+2\omega'-i0)=\lim_{\varepsilon\rightarrow 0^{+}}\psi(x+2\omega'-i\varepsilon)\quad\text{and}\quad \psi(x-2\omega'+i0)=\lim_{\varepsilon\rightarrow 0^{+}}\psi(x-2\omega'+i\varepsilon)$$
exist in the sense of the convergence in $L^{2}(\RR)$. Here for $\psi\in D(H_{0})$ formula \eqref{H-0} is understood as $(H_{0}\psi)(x)=\psi(x+2\omega'-i0)+\psi(x-2\omega'+i0)$.

In the `momentum representation' the operator $\hat{H}_{0}=\cF H_{0}\cF^{-1}$ is the operator of multiplication by the function $2\cosh(\frac{\pi ip}{\omega})$ and is naturally self-adjoint. Thus the domain  $D(H_{0})$ of the operator $H_{0}$ can be equivalently defined as 
$$D(H_{0})=\left\{\psi(x)\in L^{2}(\RR) : \int_{-\infty}^{\infty}\cosh^{2}\!\left(\frac{\pi ip}{\omega}\right)|\hat{\psi}(p)|^{2}dp<\infty\right\}$$
and is a `hyperbolic analog' of the Sobolev space $W^{2,2}(\RR)$. 
\subsection{The resolvent of the operator $H_{0}$} \label{res-free}
In the momentum representation the operator  
$$R_{0}(\lambda)=(H_{0}-\lambda)^{-1}$$
is a multiplication operator by the function  $(2\cosh(\frac{\pi ip}{\omega})-\lambda)^{-1}$,  and for $\lambda\in\CC\setminus [2,\infty)$ is a bounded operator on $L^{2}(\RR)$. Since the function $2\cosh(\frac{\pi ip}{\omega})$ is a 2-to-1 map of the real axis $-\infty<p<\infty$  onto  $[2,\infty)$, the spectrum of the operator $H_{0}$ is absolutely continuous and fills the semi-infinite interval  $[2,\infty)$ with the multiplicity $2$.

In the `coordinate representation' the operator $R_{0}(\lambda)$ for $\lambda\in\CC\setminus [2,\infty)$ is an integral operator on $L^{2}(\RR)$ with the kernel which depends on the difference,
\begin{equation} \label{R-0-def}
(R_{0}(\lambda)\psi)(x)=\int_{-\infty}^{\infty}R_{0}(x-y;\lambda)\psi(y)dy,
\end{equation}
where
\begin{equation} \label{R-0-kernel}
R_{0}(x;\lambda)=\int_{-\infty}^{\infty}\frac{e^{2\pi ipx}}{2\cosh(\frac{\pi ip}{\omega})-\lambda}dp.
\end{equation}
In the what follows we will be using a convenient parametrization
\begin{equation} \label{l-k}
\lambda=2\cosh\!\left(\frac{\pi i k}{\omega}\right),
\end{equation}
 under which the resolvent set $\CC\setminus [2,\infty)$ turns into the `physical sheet' --- the strip $0<\im k\leq |\omega|$,
and the continuous spectrum $[2,\infty)$ is twice covered by the real axis $-\infty<k<\infty$.  In parametrization \eqref{l-k} the integral in\eqref{R-0-kernel} is easily evaluated by the residue theorem and we obtain
\begin{equation} \label{R-0-formula}
R_{0}(x;\lambda) =\frac{\omega}{\sinh(\frac{\pi ik}{\omega})}\left(\frac{e^{-2\pi ikx}}{1-e^{-4\pi i\omega x}}+\frac{e^{2\pi ikx}}{1-e^{4\pi i\omega x}} \right).
\end{equation}
Note that the function $R_{0}(x;\lambda)$ is regular at $x=0$. From \eqref{R-0-formula} we immediately conclude that for $0<\im k\leq |\omega|$ the following estimate holds 
$$|R_{0}(x;\lambda)|\leq Ce^{-2\pi\im k|x|},$$
where $C>0$ is a constant\footnote{Here and in what follows we denote different constants by $C$.}, so that formulas \eqref{R-0-def} and \eqref{R-0-formula} for $\lambda\notin [2,\infty)$ do determine a bounded operator on $L^{2}(\RR)$.

The eigenvalue equation
\begin{equation} \label{H-0-eqn}
\psi(x+2\omega',k)+\psi(x-2\omega',k)=2\cosh\!\left(\frac{\pi ik}{\omega}\right) \psi(x,k)
\end{equation}
has solutions $f_{-}(x,k)=e^{-2\pi ikx}$ and $f_{+}(x,k)=e^{2\pi ikx}$, which are analogs of the Jost solutions in the theory of one-dimensional Schr\"{o}dinger equation. In terms of the Jost solutions formula  \eqref{R-0-formula} takes the form
\begin{equation} \label{R-0-kernel-2}
R_{0}(x-y;\lambda)=\frac{2\omega}{C(f_{-},f_{+})(k)}\left(\frac{f_{-}(x,k)f_{+}(y,k)}{1-e^{\frac{\pi i}{\omega'}(x-y)}}+\frac{f_{-}(y,k)f_{+}(x,k)}{1-e^{-\frac{\pi i}{\omega'}(x-y)}} \right),
\end{equation}
where 
$$C(f,g)(x,k)=f(x+2\omega',k)g(x,k)-f(x,k)g(x+2\omega',k)$$
is the so-called \emph{Casorati determinant} (difference analog of the Wronskian) of solutions of the functional-difference equation\eqref{H-0-eqn}.  It is $2\omega'$-periodic function of $x$ and for the Jost solutions
$C(f_{-},f_{+})(x,k)=2\sinh(\frac{\pi ik}{\omega})$.
\begin{remark} \label{H-0-res} Using formula \eqref{R-0-kernel-2}, one can check directly that the integral operator  \eqref{R-0-def} is the inverse of the operator $H-\lambda I$ for $\lambda\in\CC\setminus [2,\infty)$. Indeed, for a smooth function $g(x)$ with compact support it is easy to show that
$$\psi(x)=\int_{-\infty}^{\infty}R_{0}(x-y;\lambda)g(y)dy\in D(H_{0})$$
and $(H_{0}-\lambda I)\psi=g$. The last statement reduces to the following equation
\begin{equation} \label{delta}
R_{0}(x+2\omega'-y-i0;\lambda)+R_{0}(x-2\omega'-y+i0;\lambda)-\lambda R_{0}(x-y;\lambda)=\delta(x-y),
\end{equation}
understood in the distributional sense. Because the functions $f_{\pm}(x,k)$ satisfy equation \eqref{H-0-eqn}, оthe distribution in the left hand side of  \eqref{delta} has support only at $x=y$, and its singular part is the same as the singular part of the distribution 
\begin{gather*}
-\frac{2\omega\omega'}{\pi iC(f_{-},f_{+})(k)}\left(\frac{f_{-}(x+2\omega',k)f_{+}(y,k)-f_{-}(y,k)f_{+}(x+2\omega',k)}{x-y-i0} \right.\\
\left.+\frac{f_{-}(x-2\omega',k)f_{+}(y,k)+f_{-}(y,k)f_{+}(x+2\omega',k)}{x-y+i0}\right)
\end{gather*}
in the neighborhood of $x=y$. The latter is
$$-\frac{2\omega\omega'}{\pi i}\left(\frac{1}{x-y-i0}-\frac{1}{x-y+i0} \right)=\delta(x-y),$$
where we have used the definition of the Casorati determinant, normalization \eqref{norm} and the Sokhotski-Plemelj formula.
\end{remark}
\begin{remark}\label{schrodinger} It is instructive to compare formula \eqref{R-0-kernel-2} for the resolvent of the operator $H_{0}$ with that for the one-dimensional Schr\"{o}dinger equation. The latter formula is  (see, \emph{e.g.}, \cite[Ch.1, \S1]{F74} and \cite[Ch. 3]{T1})
\begin{equation} \label{res-schr}
G(x,y;\lambda)=\frac{1}{W(k)}(f_{-}(x,k)f_{+}(y,k)\theta(y-x)+f_{-}(y,k)f_{+}(x,k)\theta(x-y)),
\end{equation}
where $\lambda=k^{2}$, $f_{-}(x,k)$ and $f_{+}(x,k)$ are, respectively, the Jost solutions and  $-\infty$ and $\infty$,  and  $W(k)$ is their Wronskian. The key role in verifying the analog of formula \eqref{delta} plays the relation $\theta'(x)=\delta(x)$, where  $\theta(x)$ is the Heaviside function, $\theta(x)=1$ when $x>0$ and $\theta(x)=0$ when $x<0$. Formula  \eqref{R-0-kernel-2} has a remarkable similarity with \eqref{res-schr}, where instead of $\theta(x)$ a smoothed analog of the Heaviside function
$$\theta_{\omega'}(x)=\frac{1}{1-e^{-\frac{\pi ix}{\omega'}}}$$
is used. Here the analog of the formula $\theta'(x)=\delta(x)$ is the formula 
$$\theta_{\omega'}(x+2\omega'-i0)-\theta_{\omega'}(x+2\omega'+i0)=2\omega'\delta(x),$$
which is equivalent to the Sokhotski-Plemelj formula.Heaviside
\end{remark}
\begin{remark} As noted by A.M. Polyakov, the function $\theta_{\omega'}(x)$,
after the identification of  $x$ with the energy $\epsilon$, and $\frac{\pi i}{\omega'}=\frac{2\pi}{b}$ --- with the inverse temperature $\frac{1}{kT}$, coincides with the one-particle partition function  $\displaystyle{\mathcal{Z}=\left(1-e^{-\frac{\epsilon}{kT}}\right)^{-1}}$ in the Bose-Einstein statistics.
\end{remark}

\section{The operator $H$} \label{Def} 

The operator $H=H_0 + V$ is given by the following formal functional-difference formula
$$(H\psi)(x)=\psi(x+2\omega')+\psi(x-2\omega') + e^{\frac{\pi i x}{\omega}}\psi(x),$$
defined on $D(H_0)\cap D(V)$. In particular, $H$ is defined and symmetric on the dense in $L^{2}(\RR)$ domain $\cD\subset D(H_0)\cap D(V)$, which consists of linear combination of the functions  $p(x)e^{-x^{2}+cx}$, where $p(x)$ is a polynomial and $c\in\CC$. The domain $\cD$ is invariant for the operator  $H$. Below we will show that the operator $H$ is essentially self-adjoint on the domain $\cD$  and its unique self-adjoint extension, which we continue to denote by $H$, has a simple absolutely continuous spectrum filling $[2,\infty)$. As in the case of the free operator  $H_0$, for the operator $H$ we use parametrization  \eqref{l-k} and consider the following problem for the generalized eigenfunctions 
\begin{equation}\label{ev-H}
\psi(x+2\omega',k)+\psi(x-2\omega',k) + e^{\frac{\pi i x}{\omega}}\psi(x,k)=2\cosh\!\left(\frac{\pi ik}{\omega}\right)\psi(x,k).
\end{equation}

\subsection{The momentum representation and Kashaev's wave function} \label{Kashaev}

In the momentum representation the eigenfunction problem for the operator $H$ is the following first order functional-difference equation
\begin{equation} \label{e.v.-m}
\hat{\psi}(p+2\omega',k)=2\left(\!\cosh\!\left(\frac{\pi ik}{\omega}\right)-\cosh\left(\frac{\pi ip}{\omega}\right)\right)\hat{\psi}(p,k),\quad p\in\RR,
\end{equation}
where $\hat\psi =\cF(\psi)$. Remarkably, the general solution of equation \eqref{e.v.-m} (up to a multiplication by a quasi-constant!) is given explicitly through the modular quantum dilogarithm 
\begin{equation} \label{m-e.f}
\hat{\psi}(p,k)=c(k)e^{-\pi i(p-\omega'')^{2}}\gamma(p+k-\omega'')\gamma(p-k-\omega''),\quad 0\leq \im k\leq |\omega|,
\end{equation}
where the constant $c(p,k)$ will be chosen just below. For real $k$ the product of $\gamma$-functions is singular at $p=\pm k$ and is understood as the distribution $\gamma(p+k-\omega''+i0)\gamma(p-k-\omega''+i0)$.

The fundamental role of the generalized solution  \eqref{m-e.f} of equation \eqref{e.v.-m} was revealed in the paper \cite{K1}, and we call it  \emph{Kashaev's wave function}.
The distribution $\hat{\psi}(p,k)$ has the following asymptotics
\begin{equation} \label{as}
\hat\psi(p,k)=\begin{cases} c(k)e^{-\pi i(p-\omega'')^{2}}(1+o(1)) & \text{as}\quad p\rightarrow\infty, \\
c(k)e^{\pi i(p-\omega'')^{2} +2 i\beta + 2\pi ik^{2}}(1+o(1)) & \text{as}\quad p\rightarrow-\infty
\end{cases}
\end{equation}
and exponentially decays at large $p$,
\begin{equation} \label{p-large}
|\hat{\psi}(p,k)|=|c(k)|\exp\{-2\pi |p||\omega''|\}(1+o(1))\quad\text{as}\quad|p|\rightarrow\infty.
\end{equation}
Putting in \eqref{m-e.f}
\begin{equation} \label{c}
c(k)= e^{-i\beta-\pi ik^{2}}
\end{equation}
and denoting the corresponding solution by $\hat{\vphi}(x,k)$, we obtain important property
\begin{equation} \label{m-even}
\overline{\hat{\vphi}(p,k)}=\hat{\vphi}(-p,-\bar{k}).
\end{equation}
Moreover, for real $k$ we have  $\hat{\vphi}(p,-k) =\hat{\vphi}(p,k)$.

\begin{remark} \label{K-bessel} It is instructive to compare equation  \eqref{e.v.-m} with equation \eqref{ev-c} in the momentum representation, which has the form 
\begin{equation} \label{ev-m}
\hat{\tilde{\psi}}\!\left(p+ \frac{i}{\pi},k\right)=4\pi^2(k^2-p^2)\hat{\tilde{\psi}}(p,k),
\end{equation}
where we put $\lambda=(2\pi k)^2$. Its solution is given by the product of the Euler gamma functions
\begin{equation} \label{Gamma}
\hat{\tilde{\psi}}(p,k)=2^{-2\pi i p-2}\Gamma(\pi i(p+k))\Gamma(\pi i(p-k)),
\end{equation}
and a general solution is obtained by multiplication by a quasi-constant --- a periodic function with the period $i/\pi$.
Performing the inverse Fourier transform and putting  $s=-2\pi ip$, we get the Mellin-Barnes representation for the modified Bessel function of the second kind 
\begin{align} \label{K-B}
K_{\nu}(e^x) 
=\frac{1}{8\pi i}\int_{\sigma-i\infty}^{\sigma+i\infty}\left(\frac{e^{x}}{2}\right)^{-s}\Gamma\left(\frac{s-\nu}{2}\right)\Gamma\left(\frac{s+\nu}{2}\right)ds,
\end{align}
where $\nu=2\pi ik$ and $\sigma=\mathrm{Re}\,s>|\mathrm{Re}\,\nu|$ (see.~\cite[Ch. 7, formula (27)]{BE} for the Mellin transform of the function  $K_{\nu}(z)$). When $\mathrm{Re}\,\nu=0$,  the integration goes over the imaginary axis $\sigma=0$ bypassing the poles at $s=\pm \nu$ in the half-plane $\mathrm{Re}\,s>0$. The function $K_{\nu}(e^x)$ is entire function of $x$.
\end{remark}

\begin{remark} \label{I-bessel}
Modified Bessel functions of the first kind $I_{\nu}(e^{x})$ and $I_{-\nu}(e^{x})$,  where $\lambda=-\nu^{2}$ (see Remark \ref{K-bessel}) are also solutions of equation \eqref{ev-c}. Here
$$W(I_{-\nu}, I_{\nu})(x)=I_{-\nu}(x)I_{\nu}'(x)-I_{-\nu}'(x)I_{\nu}(x)=\frac{2\sin\pi\nu}{\pi}.$$
The function $I_{\nu}(e^{x})$ is obtained by multiplying solution  \eqref{Gamma} of equation \eqref{ev-m} by the quasi-constant $\displaystyle{\frac{(e^{-\pi i\nu }-e^{-\pi is})}{\pi i}}$, where $s=-2\pi ip$ and by replacing the contour of integration in  \eqref{K-B} by the countour $C$ on Fig. 1.  
\vspace{5mm}
\begin{center}
\begin{tikzpicture}[thick,scale=0.6]
\draw(-5,-3) -- (3.2,-3);
\draw(3.2,-3) -- (3.2,3);
\draw(3.2,3) -- (-5,3);
\node at (2,1.5) {$\dot\nu$};
\node at (-2,-1.5) {$\dot-\nu$};
\node at (3.8,0) {$C$};
\node at (-3,3) {$<$};
\node at (-3,-3) {$>$};
\node at (0,-4) {$\text{Fig. 1}$};
\end{tikzpicture}
\end{center}
\noindent
As the result we get an integral representation
\begin{equation} \label{I-B}
I_{\nu}(e^{x})=-\frac{1}{8\pi^{2}}\int\limits_{C}\left(\frac{e^{x}}{2}\right)^{-s}\Gamma\left(\frac{s-\nu}{2}\right)\Gamma\left(\frac{s+\nu}{2}\right)(e^{-\pi i\nu }-e^{-\pi is})ds.
\end{equation}
Shifting the contour of integration $C$ to the left to $-\infty$, we obtain the standard representation of мы получаем стандартное $I_{\nu}(e^{x})$ as the power series in variable $e^{x}$. The factor $e^{-\pi i\nu }-e^{-\pi is}$ ensures that there are no poles at $s=\nu+2-2n$, $n\in\NN$. The function $I_{\nu}(e^x)$ is entire function of $x$ and $I_{\nu}(e^{x+\pi i})=e^{\pi i\nu}I_{\nu}(e^{x})$. From \eqref{K-B} and \eqref{I-B} we get
\begin{equation} \label{K-I}
K_{\nu}(e^{x})=\frac{\pi}{2\sin\pi\nu}\left(I_{-\nu}(e^{x})-I_{\nu}(e^{x})\right),
\end{equation}
and also
$$K_{\nu}(e^{x+\pi i})=\frac{\pi}{2\sin\pi\nu}\left(e^{-\pi i\nu}I_{-\nu}(e^{x})-e^{\pi i\nu}I_{\nu}(e^{x})\right),$$
so that
\begin{equation}\label{I-K}
I_{\nu}(e^{x})=\frac{1}{\pi i}\left(e^{-\pi i\nu}K_{\nu}(e^{x})-K_{\nu}(e^{x+\pi i})\right). 
\end{equation}
The function $I_{\nu}(e^x)$  has the following asymptotic as  $ x\rightarrow-\infty$,
$$I_{ik}(e^{x})=\frac{2^{-ik}}{\Gamma(1+ik)}\left(e^{ikx}+o(1) \right),$$
and grows as a double exponent as $x\rightarrow\infty$, whereas $K_{\nu}(e^{x})=O(e^{-e^{x}})$ as $x\rightarrow\infty$.
\end{remark}

\subsection{The scattering solution} \label{scatt}
For real $x$ and $k$ put
\begin{equation} \label{phi-coor}
\varphi(x,k)=\int_{-\infty}^{\infty}\hat{\vphi}(p,k)e^{2\pi i px}dp,
\end{equation}
where $\hat\vphi(p,k)$ is given by formulas  \eqref{m-e.f} and \eqref{c}. Remark \ref{K-bessel} shows that the function $\vphi(x,k)$ plays the role of $q$-deformed modified Bessel function $K_{ik}(e^x)$. It follows from asymptotic  \eqref{as} that the Kashaev's wave function exponentially decays as $|\mathrm{Re}\, p|\rightarrow\infty$ along the lines $\im p=\sigma<|\omega''|$, therefore
\begin{equation} \label{phi-coor-2}
\varphi(x,k)=\int_{-\infty+i\sigma}^{\infty+i\sigma}\hat{\vphi}(p,k)e^{2\pi i px}dp,
\end{equation}
This formula determines the function $\varphi(x,k)$ for real $x$ and $k$ in the physical strip $0<\im k\leq |\omega|$, где $|\omega|<\sigma<|\omega''|$.

Analytic properties of the function are $\phi(x,k)$ described in the following lemma. 
\begin{lemma} \label{phi-properties}  The following statements hold.
\begin{itemize}
\item[(i)]  For real $x$ and $k$ the function $\vphi(x,k)$ has the following asymptotic
$$\vphi(x,k)=M(k)e^{2\pi ikx}+M(-k)e^{-2\pi ikx} + o(1)\quad\text{as}\quad x\rightarrow -\infty,$$
where
$$M(k)=e^{i(\beta+\frac{\pi}{4}) - 2\pi ik(k-\omega'')}\gamma(2k-\omega'').$$
Here $\overline{M(k)}=M(-k)$ and
$$\frac{1}{|M(k)|^{2}}=4\sinh\!\left(\frac{\pi i k}{\omega}\right)\sinh\!\left(\frac{\pi i k}{\omega'}\right).$$
\item[(ii)] For real $x$ the function $\varphi(x,k)$ admits analytic continuation into the physical strip  $0<\im k\leq |\omega|$ and satisfies the reality condition
$$\overline{\varphi(x,k)}=\varphi(x,-\bar{k}).$$
For real $x$ and  $k$ the function $\vphi(x,k)$ is a real-valued even function of $k$.
\item[(iii)] For fixed $k$ in the physical strip the function $\varphi(x,k)$ extends to an entire function of the complex variable $x$ and satisfies the equation 
$$\vphi(x+2\omega',k)+\vphi(x-2\omega',k)+e^{\frac{\pi i x}{\omega}}\vphi(x,k)=2\cosh\!\left(\frac{\pi ik}{\omega}\right)\vphi(x,k).$$
\item[(iv)] The following estimates hold:
\begin{equation*}
|\vphi(x,k)|\leq Ce^{-2\pi\kappa x},
\end{equation*}
uniformly in $-\infty<x\leq a$, where $0\leq\kappa=\mathrm{Im}\,k\leq |\omega|$ and
\begin{equation*}
|\vphi(x,k)|\leq C e^{-2\pi(|\omega|+|\omega'|)x},\quad 
|\vphi(x\pm 2\omega',k)|\leq C e^{2\pi(|\omega'|-|\omega|)x},
\end{equation*}
uniformly in $a\leq x<\infty$.
\end{itemize}
\end{lemma}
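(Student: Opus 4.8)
The entire analysis will rest on the contour representation \eqref{phi-coor-2}, which I would first justify for complex $k$ in the physical strip $0<\im k\le|\omega|$. The only poles of $\hat{\vphi}(p,k)$ in the variable $p$ are those of the two $\gamma$-factors in \eqref{m-e.f}; by property QD1 they sit at $p=k-2m\omega-2n\omega'$ and $p=-k-2m\omega-2n\omega'$ with integers $m,n\ge 0$, all of which have imaginary part at most $\kappa=\im k\le|\omega|<\sigma$. Hence for any $\sigma\in(|\omega|,|\omega''|)$ the horizontal contour $\im p=\sigma$ lies strictly above every pole, and the rapid decay \eqref{p-large} makes the integral converge locally uniformly in $(x,k)$. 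This at once gives the analyticity in $k$ on the strip asserted in (ii), while the reality condition $\overline{\varphi(x,k)}=\varphi(x,-\bar k)$ and the reality and evenness for real $x,k$ follow by conjugating the integral and applying the symmetry \eqref{m-even} together with $\hat{\vphi}(p,-k)=\hat{\vphi}(p,k)$, after the substitution $p\mapsto-p$.

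For statement (iii) I would first check that the Kashaev wave function solves the first-order equation \eqref{e.v.-m}: substituting \eqref{m-e.f} and using the pair of difference equations QD2 for $\gamma$ reduces \eqref{e.v.-m} to an elementary identity among the exponential prefactors, where the normalization \eqref{c} and $\omega\omega'=-\tfrac14$ are used. Taking the inverse Fourier transform of \eqref{e.v.-m} term by term --- noting that multiplication of $\hat{\vphi}$ by $2\cosh(\pi ip/\omega)$ produces $\vphi(x+2\omega')+\vphi(x-2\omega')$ while the shift $\hat{\vphi}(p+2\omega')$ produces $e^{\pi ix/\omega}\vphi(x)$ --- yields equation \eqref{ev-H} on the open strip $|\im x|<|\omega''|$ where the integral converges. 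Since $e^{\pi ix/\omega}$ is entire, solving \eqref{ev-H} for $\vphi(x+2\omega')$ expresses it through values of $\vphi$ on a strip shifted by $2\omega'$; iterating this relation upward and its $U^{-1}$-analogue downward continues $\vphi(\cdot,k)$ to an entire function of $x$, proving (iii).

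Statement (i) is where the real work lies. Starting from \eqref{phi-coor-2} with $\sigma\in(|\omega|,|\omega''|)$ I would push the contour \emph{downward} across the two leading poles $p=k$ and $p=-k$ to a line $\im p=\sigma'$ with $\sigma'<0$ lying below no further pole. For real $k$ these two poles lie on the real axis and all remaining poles have imaginary part $\le-2\min(|\omega|,|\omega'|)$, so such a $\sigma'$ exists. The residue at $p=k$, read off from the local behaviour $\gamma(z-\omega'')\sim c/z$ of QD1 applied to $\gamma(p-k-\omega'')$, contributes exactly $M(k)e^{2\pi ikx}$ after collecting the prefactors $c$, $c(k)$ and $\gamma(2k-\omega'')$; the residue at $p=-k$ contributes $M(-k)e^{-2\pi ikx}$ by symmetry. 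The leftover integral over $\im p=\sigma'$ is $O(e^{-2\pi\sigma' x})$, which is exponentially small as $x\to-\infty$ because $\sigma'<0$, giving the $o(1)$ remainder. The relations $\overline{M(k)}=M(-k)$ and $1/|M(k)|^{2}=4\sinh(\pi ik/\omega)\sinh(\pi ik/\omega')$ I would then obtain from the explicit formula for $M$ by combining the reflection formula QD3 and the difference equations QD2 to rewrite $M(k)M(-k)=1/\rho(k)$, with QD4 supplying the complex conjugate.

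Finally, the estimates (iv) come from the same representation with a judicious choice of contour. For $-\infty<x\le a$ I would keep the decomposition from (i): the residue terms are bounded by $Ce^{-2\pi\kappa x}$ and the $\sigma'$-integral is bounded (indeed decaying) as $x\to-\infty$, yielding $|\vphi(x,k)|\le Ce^{-2\pi\kappa x}$ uniformly. For $a\le x<\infty$ I would instead push the contour \emph{upward} to the line $\im p=|\omega''|$, beyond which the Gaussian factor in \eqref{m-e.f} spoils convergence; since $|e^{2\pi ipx}|=e^{-2\pi|\omega''|x}$ is constant along this line, the bound $|\vphi(x,k)|\le Ce^{-2\pi(|\omega|+|\omega'|)x}$ follows from the convergence guaranteed by \eqref{p-large}. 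The shifted estimate for $\vphi(x\pm2\omega',k)$ then drops out of \eqref{ev-H}: solving for $\vphi(x\pm2\omega')$ and using $e^{\pi ix/\omega}=e^{4\pi|\omega'|x}$ with the bound just proved for $\vphi(x)$ gives the exponent $2\pi(|\omega'|-|\omega|)$. I expect the principal obstacle to be the rigorous bookkeeping in (i): justifying the contour deformation in the presence of the Gaussian factor $e^{-\pi i(p-\omega'')^{2}}$, verifying that no poles other than $p=\pm k$ are crossed, and carrying the many exponential prefactors through the residue computation so that the constant in $M(k)$ emerges exactly as stated.
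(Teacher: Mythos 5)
Your treatment of parts (i) and (ii) coincides with the paper's: shift the horizontal contour in \eqref{phi-coor-2} downward past the poles at $p=\pm k$, read off $M(\pm k)$ from the residues via QD1), and obtain the reality and evenness from \eqref{m-even} and QD4). The genuine gap is in (iii) and (iv), and it stems from working only with horizontal contours. For (iii) you establish the difference equation on the strip $|\im x|<|\omega''|$ (where the horizontal-contour integral converges) and then propose to iterate the three-term recurrence in steps of $2\omega'$ to continue $\vphi$ to an entire function. But to start that recurrence on a given vertical line you need two consecutive points $x$ and $x-2\omega'$ of the $2\omega'$-lattice inside the strip, i.e.\ you need $4|\omega'|<2|\omega''|=2(|\omega|+|\omega'|)$, which holds only when $|\omega'|<|\omega|$, that is $b<1$. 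For $b\geq 1$ the iteration cannot get off the ground (and the same obstruction makes your bound on $\vphi(x\pm 2\omega',k)$ circular, since $x-2\omega'$ already lies outside the strip of convergence). The device you are missing is the paper's contour $L$ of \eqref{phi-coor-3}: bend the tails of the contour into rays at angles $\mp\pi/4$ into the lower half-plane, where by QD5) the Gaussian factor $e^{-\pi i(p-\omega'')^{2}}$ decays like $e^{-\pi t^{2}}$, so the integral converges for \emph{every} complex $x$ and defines the entire continuation directly, for all $b>0$.

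The same issue breaks your argument for the decay as $x\rightarrow+\infty$ in (iv). You cannot push the contour to the line $\im p=|\omega''|$: there the factor $e^{\mp\pi i(p-\omega'')^{2}}$ has modulus one at both ends and the integral diverges; taking $\im p=|\omega''|-\varepsilon$ yields only $Ce^{-2\pi(|\omega''|-\varepsilon)x}$ (or, after optimizing $\varepsilon\sim 1/x$, an extra factor of $x$), not the stated $Ce^{-2\pi(|\omega|+|\omega'|)x}$. The exact exponent comes from the saddle point of $-\pi i(p-\omega'')^{2}+2\pi ipx$ at $p=x+\omega''$, i.e.\ from a steepest-descent estimate on the contour $L$, which is precisely how the paper disposes of (iv). Your plan for (i) and for the $x\leq a$ half of (iv) is sound, modulo the bookkeeping that for $\im k>|\omega'|$ the downward shift may cross secondary poles such as $p=k-2\omega'$, whose residues are dominated by the main terms; but (iii) and the large-$x$ estimates genuinely require the rotated contour.
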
 
\begin{proof} Shifting for real $x$ the contour of integration in formula \eqref{phi-coor} to the lower half-plane and passing the poles of the integrand at $p=\pm k$,  we get the first formula in (i). The formulas for the coefficient $M(k)$ follow from the properties 
QD1)--QD4) of the modular quantum dilogarithm. The statement (ii) directly follows from the analytic properties of the modular quantum dilogarithm and property \eqref{m-even}. In particular, $\overline{M(k)}=M(-\bar{k})$.

To proof (iii), deform the contour of integration in the integral representation \eqref{phi-coor-2} to the contour $L$ by replacing the semi-infinite intervals  $-\infty<\mathrm{Re}\,p\leq-|\mathrm{Re}\,k|-1$ and $|\mathrm{Re}\,k|+1\leq\mathrm{Re}\,p<\infty$ on the line $\im p=\sigma$ by the rays $p=-|\mathrm{Re}\,k|-1 +i\sigma +e^{\frac{\pi i}{4}}t$, where $-\infty<t\leq 0$ and $p=|\mathrm{Re}\,k|+1+ i\sigma +e^{-\frac{\pi i}{4}}t$, where $0\leq t<\infty$ (on Fig. 2 the contour $L$ is shown for real  $k$).
  
\vspace{5mm}
\begin{tikzpicture}[thick]
\draw (-6,-2) -- (-4,0);
\draw(-4,0) -- (-2.3,0);
\draw (-2.3,0) to [out=90,in=180] (-2,0.3) to [out=0,in=90] (-1.7,0);
\draw (-1.7,0) -- (1.7,0);
\draw (1.7,0) to [out=90,in=180] (2,0.3) to [out=0,in=90] (2.3,0);
\draw (2.3,0) -- (4,0);
\draw (4,0) -- (6,-2);
\node at (0,.3) {$L$};
\node at (-3,0) {$>$};
\node at (3,0) {$>$};
\node at (-2,0) {$\cdot$};
\node at (2,0) {$\cdot$};
\node at (-2.13,-.25) {$-k$};
\node at (2,-.25) {$k$};
\node at (0,-3) {$\text{Fig. 2}$};
\end{tikzpicture}

\noindent
Thus
\begin{equation} \label{phi-coor-3}
\varphi(x,k)=\int\limits_{L}\hat{\vphi}(p,k)e^{2\pi i px}dp,
\end{equation}
and it follows from the property QD5) of the function $\gamma(z)$ that on the contour $L$ the integrand in \eqref{phi-coor-3} decays as  $e^{-\pi t^{2}}$ when $t\rightarrow\pm\infty$,  so that formula  \eqref{phi-coor-3} determines  $\vphi(x,k)$ as entire function of the variable $x$. The difference equation for  $\vphi(x,k)$ is obtained from \eqref{e.v.-m} by the means of the Fourier transform. Finally, it is standard to deduce the estimates  (iv) from the integral representation \eqref{phi-coor-3} using asymptotics к QD5) of the function $\gamma(z)$ and the steepest descent method. We leave a detailed derivation to the reader.
\end{proof}

\begin{remark} \label{dual-2} The function $\vphi(x,k)$ is invariant under the interchange of  $\omega$ and $\omega'$ and satisfies the dual equation $\check{H}\vphi=\check{\lambda}\vphi$, where $\check{H}=\check{U}+\check{U}^{-1}+ \check{V}$ (see Remark \ref{dual}) and $\check\lambda=2\cosh\!\left(\frac{\pi ik}{\omega'}\right)$.
\end{remark}

\subsection{The Jost solutions} \label{Jost}
When $x\rightarrow-\infty$ equation \eqref{ev-H} takes the free form  \eqref{H-0-eqn} and it is natural to assume that equation \eqref{ev-H} has the Jost solutions  --- solutions $f_{\pm}(x,k)$ with the following asymptotics
\begin{equation} \label{f-asym}
f_{\pm}(x,k)=e^{\pm2\pi ikx} + o(1)\quad\text{as}\quad x\rightarrow-\infty.
\end{equation}

Here we prove the existence of such solutions. The starting point for us is the comparison of equation \eqref{ev-H} with equation \eqref{ev-c}, where the role of the scattering solution is played by the function $\tilde{\vphi}(x,k)=K_{ik}(e^{x})$. Corresponding Jost solutions are the functions  $\tilde{f}_{+}(x,k)=2^{ik}\Gamma(1+ik)I_{ik}(e^{x})$ and $\tilde{f}_{-}(x,k)=\tilde{f}(x,-k)$ having the asymptotics
$$\tilde{f}_{\pm}(x,k)=e^{\pm ikx} + o(1)\quad\text{as}\quad x\rightarrow-\infty.$$
Here
$$\tilde{\vphi}(x,k)=\tilde{M}(k)\tilde{f}_{+}(x,k) + \tilde{M}(-k)\tilde{f}_{-}(x,-k),$$
where
$$\tilde{M}(k)=-\frac{\pi\,2^{-1-ik}}{\sin(\pi i k)\Gamma(1+ik)}=2^{-1-ik}\Gamma(-ik).$$

Using these arguments and Remark \ref{K-bessel}, consider the solution of equation  \eqref{e.v.-m}, obtained from  $\hat{\vphi}(p,k)$ by
multiplying by the quasi-constant $\sinh\!\left(\frac{\pi i p}{\omega'}\right) + \sinh\!\left(\frac{\pi i k}{\omega'}\right)$, and for real $x$ and $k$ put
\begin{equation} \label{f-coor}
f(x,k)=\frac{1}{2 \sinh\!\left(\frac{\pi i k}{\omega'}\right)
\!M(k)}\int\limits_{L}\hat{\vphi}(p,k)\left(\sinh\left(\frac{\pi i p}{\omega'}\right) + \sinh\left(\frac{\pi i k}{\omega'}\right)\right)e^{2\pi i px}dp.
\end{equation}
The next statement shows that the functions $f_{+}(x,k)=f(x,k)$ and $f_{-}(x,k)=f(x,-k)$ indeed play the role of the Jost solutions of equation  \eqref{ev-H}.

\begin{lemma} \label{f-properties}
The following statements hold.
\begin{itemize} 
\item[(i)] For real $x$ and  $k$ the functions $f_{\pm}(x,k)$ have the following asymptotics as $x\rightarrow-\infty$ 
$$f_{\pm}(x,k)=e^{\pm2\pi ikx} + o(1).$$
\item[(ii)] For real $x$ the functions $f_{\pm}(x,k)$ admit analytic continuation into the physical strip  $0< \im k\leq |\omega|$ and satisfy 
$$\overline{f_{\pm}(x,k)}=f_{\pm}(x,-\bar{k}).$$
\item[(iii)] For fixed $k$ in the physical strip the functions $f_{\pm}(x,k)$ are entire functions of the variable  $x$ and satisfy equation\eqref{ev-H}. Asymptotics in part (i) remain valid in the strip $0\leq\im x\leq 2|\omega'|$ as well.
\item[(iv)] The following relation holds
$$\vphi(x,k)=M(k)f_{+}(x,k)+M(-k)f_{-}(x,k).$$
\item[(v)] The following estimates hold
$$|f_{\pm}(x,k)|\leq Ce^{\mp2\pi\kappa x},$$
uniformly on $-\infty<x\leq a$, where  $0\leq \kappa=\im k\leq |\omega|$ and
$$|f_{\pm}(x,k)|\leq Ce^{2\pi(|\omega|-|\omega'|)x},\quad |f_{\pm}(x+2\omega',k)|\leq Ce^{2\pi(|\omega|+|\omega'|)x},$$
uniformly on $a\leq x<\infty$.
\end{itemize}
\end{lemma}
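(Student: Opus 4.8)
The plan is to read off all five statements directly from the integral representation \eqref{f-coor}, exploiting that the inserted factor $\sinh(\pi i p/\omega')+\sinh(\pi i k/\omega')$ is a \emph{quasi-constant} in the momentum variable, i.e.\ it is $2\omega'$-periodic in $p$ because $\sinh(\pi i(p+2\omega')/\omega')=\sinh(\pi i p/\omega'+2\pi i)=\sinh(\pi i p/\omega')$. Consequently the integrand $\hat{\vphi}(p,k)(\sinh(\pi i p/\omega')+\sinh(\pi i k/\omega'))$ is again a solution of the first-order equation \eqref{e.v.-m}, and since the inverse Fourier transform carries \eqref{e.v.-m} into \eqref{ev-H}, the function $f(x,k)$ solves \eqref{ev-H}; this gives the difference equation in (iii). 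For the entirety in $x$ I would repeat verbatim the contour argument from the proof of Lemma \ref{phi-properties}(iii): on $L$ the Gaussian factor forces $\hat{\vphi}(p,k)$ to decay like $e^{-\pi t^{2}}$, and since the $\sinh$-factor grows only exponentially in $t$ along the rays of $L$, the product remains Gaussian-dominated and \eqref{f-coor} defines an entire function of $x$. Thus the foundational half of (iii) is immediate.

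Next I would establish (iv) by a short algebraic manipulation of the integrals, which is the conceptual heart of the construction. Writing $s_{p}=\sinh(\pi i p/\omega')$ and $s_{k}=\sinh(\pi i k/\omega')$, and using that $\hat{\vphi}(p,k)$ is even in $k$ (both $c(k)$ and the product $\gamma(p+k-\omega'')\gamma(p-k-\omega'')$ are invariant under $k\mapsto -k$), one has $M(k)f_{+}(x,k)=\tfrac{1}{2s_{k}}\int_{L}\hat{\vphi}(p,k)(s_{p}+s_{k})e^{2\pi i p x}\,dp$ and $M(-k)f_{-}(x,k)=-\tfrac{1}{2s_{k}}\int_{L}\hat{\vphi}(p,k)(s_{p}-s_{k})e^{2\pi i p x}\,dp$; adding these the $s_{p}$-terms cancel and the $s_{k}$-terms combine to leave exactly $\int_{L}\hat{\vphi}(p,k)e^{2\pi i p x}\,dp=\vphi(x,k)$ by \eqref{phi-coor-3}. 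This proves (iv).

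For the asymptotics (i) I would shift the contour $L$ into the lower half-plane as $x\to-\infty$, exactly as in the proof of Lemma \ref{phi-properties}(i), collecting the residues of $\hat{\vphi}(p,k)$ at its simple poles, the factors $\gamma(p-k-\omega'')$ and $\gamma(p+k-\omega'')$ producing poles at $p=k$ and $p=-k$ respectively (QD1). The decisive point is that $s_{p}+s_{k}$ has a simple zero at $p=-k$, which cancels that pole entirely, so only $p=k$ contributes; since $s_{p}+s_{k}=2s_{k}$ there, the normalization $\tfrac{1}{2s_{k}M(k)}$ converts the residue --- which for $\vphi$ produced the coefficient $M(k)$ --- into exactly $1$, giving $f_{+}(x,k)=e^{2\pi i k x}+o(1)$. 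The mirror computation for $f_{-}(x,k)=f(x,-k)$, where now $s_{p}-s_{k}$ kills the pole at $p=k$, yields $e^{-2\pi i k x}+o(1)$. This is consistent with (iv) and Lemma \ref{phi-properties}(i), which I would use as a sanity check.

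Finally, statement (ii) would follow from the analytic dependence of $\gamma$, hence of $\hat{\vphi}(p,k)$ and of $M(k)$, on $k$ in the strip $0<\im k\le|\omega|$, together with the reality relation \eqref{m-even} and $\overline{M(k)}=M(-\bar{k})$ from Lemma \ref{phi-properties}(ii); the $\sinh$-factors behave well under conjugation because $\pi i/\omega'$ is real ($\omega'$ being pure imaginary), so that $\overline{\sinh(\pi i p/\omega')}=\sinh(\pi i\bar{p}/\omega')$, and the substitution $p\mapsto -p$ in the integral then produces $\overline{f_{\pm}(x,k)}=f_{\pm}(x,-\bar{k})$. The estimates (v) and the validity of the asymptotics in the strip $0\le\im x\le 2|\omega'|$ asserted in (iii) I would obtain from the steepest-descent analysis of \eqref{f-coor} on $L$ using QD5), just as for Lemma \ref{phi-properties}(iv), tracking how the extra $\sinh$-factors and the shift $x\mapsto x+2\omega'$ modify the exponential rates. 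The main obstacle I anticipate is precisely this last uniform bookkeeping: one must verify that, after the contour shift, the non-residue remainder in (i) is $o(1)$ uniformly for complex $x$ with $0\le\im x\le 2|\omega'|$, and that the growth introduced by the $\sinh$-factors does not degrade the exponential estimates in (v). The algebraic identity (iv) and the pole-cancellation in (i) are clean, but the error control requires careful steepest-descent estimates along the rays of $L$.
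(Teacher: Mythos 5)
Your proposal is correct and follows essentially the same route as the paper: the asymptotics (i) via the contour shift and the cancellation of the pole at $p=-k$ by the zero of $\sinh(\pi ip/\omega')+\sinh(\pi ik/\omega')$, the difference equation and entirety from the $2\omega'$-periodicity of that quasi-constant and the contour $L$, and the estimates (v) deferred to steepest descent. The only cosmetic difference is that the paper packages (ii)--(iv) through the identity \eqref{f-phi}, which rewrites $f(x,k)$ as a combination of $\vphi(x,k)$ and $\vphi(x\pm2\omega,k)$ (using $1/(2\omega')=-2\omega$), and then inherits everything from Lemma \ref{phi-properties}; your direct cancellation of the $\sinh(\pi ip/\omega')$ terms in (iv) is exactly that identity carried out at the level of the integrands.
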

\begin{proof} To proof (i) it is sufficient to shift the contour of integration in  \eqref{f-coor} for negative $x$ to the lowe half-plane and to use regularity of the integrand at $p=-k$ (due to the multiplication by the quasi-constant $\sinh\!\left(\frac{\pi i p}{\omega'}\right) + \sinh\!\left(\frac{\pi i k}{\omega'}\right)$). Parts (ii)--(iv) immediately follow from representation  \eqref{f-coor}, rewritten in the form
\begin{equation} \label{f-phi}
f(x,k)=\frac{1}{4 \sinh\!\left(\frac{\pi i k}{\omega'}\right)\!M(k)}\left(\vphi(x-2\omega,k)-\vphi(x+2\omega,k)+2\sinh\!\left(\tfrac{\pi i k}{\omega'}
\right)\!\vphi(x,k)\right),
\end{equation}
and similar properties of the function $\vphi(x,k)$ in Lemma \ref{phi-properties}. Since the  `potential' $e^{\frac{\pi i x}{\omega}}$ in equation  \eqref{ev-H} has period $2\omega$,  the functions $\vphi(x\pm2\omega,k)$ also satisfy \eqref{ev-H}. The proof of the estimates  (v) goes in a  standard way using integral representation  \eqref{f-coor}.
\end{proof}

\begin{remark}  Formula  \eqref{f-phi} is a difference analog of \eqref{I-K}. The function $f(x,k)$ is not invariant under the interchange of $\omega$ and $\omega'$, and therefor does not satisfy the duel eigenvalue problem(cf. Remark \ref{dual-2}).
\end{remark}
\begin{remark} In the case when $\im\tau>0$, the Jost solutions  $f_{\pm}(x,k)$ can be defined using power series in the variable  $e^{\frac{\pi ix}{\omega}}$, which are absolutely convergent for all $x\in\RR$. In our case  $\tau=b^{2}>0$ and the problem of small denominators emerges, so that the corresponding series are no longer convergent for all $x$. This is why we are using integral representation \eqref{f-coor}.
\end{remark}
\subsection{The Casorati determinant} As in Section \ref{res-free} it is checked directly that the Casorati determinant 
$$C(f,g)(x,k)=f(x+2\omega',k)g(x,k)-f(x,k)g(x+2\omega',k)$$
of two solutions of equation  \eqref{ev-H} is a  $2\omega'$-periodic function of the variable  $x$. Unlike its continuous analog, the Wronskian, the Casorati determinant, generally speaking, is no longer a constant.  Nevertheless, the following statement holds.\begin{lemma} \label{Casorati} We have the formula
$$C(f_{-}, f_{+})(x,k)= 2\sinh\left(\frac{\pi i k}{\omega}\right).$$
\end{lemma}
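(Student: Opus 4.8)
The plan is to exploit that $C(f_-,f_+)(x,k)$, being assembled from the entire solutions $f_\pm$, is itself an entire function of $x$ and, as already observed, is $2\omega'$-periodic. Since $2\omega'$ is purely imaginary, $2\omega'=2i|\omega'|$, such a function expands as a Laurent series in the variable $w=e^{\pi x/|\omega'|}$, namely $C(f_-,f_+)(x,k)=\sum_{n\in\ZZ}c_n(k)w^n$. The strategy is to show that this series collapses to its constant term by controlling $C$ separately at the two ends of the real $x$-axis, and then to read off the constant from the behaviour as $x\rightarrow-\infty$.

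First I would treat $x\rightarrow-\infty$. By Lemma \ref{f-properties}(i) and (iii) the asymptotics $f_\pm(x,k)=e^{\pm2\pi ikx}+o(1)$ hold and remain valid after the shift $x\mapsto x+2\omega'$ inside the strip $0\leq\im x\leq 2|\omega'|$. For real $k$ the exponentials $e^{\pm2\pi ikx}$ stay bounded in $x$, so inserting these into the definition of $C$ and retaining only the products of leading terms gives
$$\lim_{x\rightarrow-\infty}C(f_-,f_+)(x,k)=e^{-4\pi ik\omega'}-e^{4\pi ik\omega'}=2\sinh\!\left(\frac{\pi ik}{\omega}\right),$$
where the final equality uses the normalization $4\omega'=-1/\omega$ coming from \eqref{norm}. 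In particular $C$ is bounded as $x\rightarrow-\infty$, which forces $c_n(k)=0$ for all $n<0$.

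The decisive and hardest step is the behaviour as $x\rightarrow+\infty$. The crude estimates of Lemma \ref{f-properties}(v) only yield a bound of order $e^{4\pi|\omega|x}=w$ for $C$, which would still allow a surviving term $c_1(k)w$; since the Casorati determinant is not a priori constant, these bounds are simply not sharp enough. To kill this mode I would instead use relation (iv), $\vphi=M(k)f_++M(-k)f_-$. Bilinearity of the Casorati determinant together with $C(f_+,f_+)=0$ gives $C(f_-,f_+)=M(-k)^{-1}C(\vphi,f_+)$, valid for real $k\neq0$ where $M(-k)\neq0$. Here $\vphi$ is the \emph{recessive} solution: by Lemma \ref{phi-properties}(iv) its decay $e^{-2\pi(|\omega|+|\omega'|)x}$ exactly cancels the growth $e^{2\pi(|\omega|+|\omega'|)x}$ of $f_+(x+2\omega',k)$, while the growth $e^{2\pi(|\omega'|-|\omega|)x}$ of $\vphi(x+2\omega',k)$ cancels the growth $e^{2\pi(|\omega|-|\omega'|)x}$ of $f_+(x,k)$. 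Consequently both terms of $C(\vphi,f_+)$ are $O(1)$, so $C(f_-,f_+)$ is bounded as $x\rightarrow+\infty$, whence $c_n(k)=0$ for all $n\geq1$.

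Combining the two ends, only $c_0(k)$ survives, so $C(f_-,f_+)(x,k)$ is independent of $x$, and its value equals the limit already computed, $2\sinh(\pi ik/\omega)$. This establishes the formula for all real $k\neq0$, and it extends to all $k$ by analytic continuation since both sides are analytic in $k$ (with the case $k=0$ following by continuity). I expect the main obstacle to be exactly the estimate at $+\infty$: the non-constancy of the Casorati determinant means one cannot argue as for the Wronskian, and the essential trick is to replace $f_-$ by the recessive solution $\vphi$ in order to produce the $O(1)$ cancellation that removes the otherwise-admissible mode $c_1(k)w$.
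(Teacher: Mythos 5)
Your strategy coincides with the paper's own proof in its essentials: compute the limit $2\sinh\!\left(\frac{\pi ik}{\omega}\right)$ as $x\rightarrow-\infty$ from the Jost asymptotics and the normalization \eqref{norm}, and at the other end trade one Jost solution for the recessive solution $\vphi$ through Lemma \ref{f-properties}(iv), so that the decay of $\vphi$ from Lemma \ref{phi-properties}(iv) cancels the growth of $f_{+}$ from Lemma \ref{f-properties}(v). That substitution is exactly the paper's trick, your identity $C(f_{-},f_{+})=M(-k)^{-1}C(\vphi,f_{+})$ is correct, and the Laurent expansion in $w=e^{\pi x/|\omega'|}$ is just Liouville's theorem for a periodic entire function in different packaging.

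There is, however, a genuine gap at the $+\infty$ end. The estimates you invoke are stated for real $x$, so what you actually obtain is boundedness of $C(x)$ on the boundary lines $\im x=0$ and $\im x=2|\omega'|$ of the period strip. That is not enough to conclude $c_{n}(k)=0$ for $n\geq 1$: the coefficient $c_{n}$ is computed from a contour $|w|=R$, which pulls back to a full vertical segment $\mathrm{Re}\,x=\mathrm{const}$ crossing the \emph{interior} of the strip, so you need control of $C$ there. The inference fails without it: the function $e^{\,i e^{\pi x/|\omega'|}}$ is entire, $2\omega'$-periodic, tends to $1$ as $\mathrm{Re}\,x\rightarrow-\infty$ uniformly in the strip, and has modulus $1$ on both boundary lines, yet it is double-exponentially unbounded on interior horizontal lines and is certainly not constant --- it satisfies every property you have established for $C$. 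The paper closes exactly this gap: it observes from the integral representation \eqref{f-coor} that $C(x)$ has at most exponential growth in the strip, and then applies the Phragm\'{e}n--Lindel\"{o}f theorem to upgrade boundedness on the two boundary lines to boundedness throughout the strip, after which periodicity and Liouville give constancy. You need to insert this a priori growth bound and the Phragm\'{e}n--Lindel\"{o}f step (or an equivalent interior estimate) before the conclusion $c_{n}(k)=0$, $n\geq 1$, is legitimate.
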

\begin{proof} Put $C(x)=C(f_{-}, f_{+})(x,k)$. As $x\rightarrow -\infty$, it follows from (i) and (iii) in Lemma \ref{f-properties} that in the strip $0\leq \im x\leq 2|\omega'|$ the following asymptotic holds
$$C(x)=2\sinh\left(\frac{\pi i k}{\omega}\right) + o(1).$$
When $x\rightarrow\infty$, using 
$$C(x)=2\sinh\left(\frac{\pi ik}{\omega'}\right)C(f_{-},\vphi)(x,k)$$
and the estimates in parts (iv) of Lemma \ref{phi-properties} and (v) of Lemma \ref{f-properties}, we obtain that on the lines $\im x=0$ and $\im x=2|\omega'|$ the function $C(x)$ is bounded. Further, it follows from integral representation \eqref{f-coor} that the function $C(x)$ has no more than exponential growth as $x\rightarrow\infty$. Using Phragm\'{e}n-Lindel\"{o}f theorem, we conclude that $2\omega'$-periodic function $C(x)$ is bounded in the strip $0\leq \im x\leq 2|\omega'|$. Therefore, $C(x)= 2\sinh\left(\frac{\pi i k}{\omega}\right)$. 
\end{proof}

\section{The eigenfuncion expansion theorem}
\subsection{The resolvent of the operator $H$} \label{resolvent-H} Consider the integral operator $R(\lambda)$ on $L^{2}(\RR)$ with the integral kernel
\begin{equation} \label{R-kernel-2}
R(x,y;\lambda)=\frac{\omega}{\sinh\left(\frac{\pi i k}{\omega}\right)M(k) }\left(\frac{f_{-}(x,k)\vphi(y,k)}{1-e^{\frac{\pi i}{\omega'}(x-y)}}+\frac{f_{-}(y,k)\vphi(x,k)}{1-e^{-\frac{\pi i}{\omega'}(x-y)}} \right),
\end{equation}
so that
$$R(y,x;\lambda)=R(x,y;\lambda)\quad\text{and}\quad \overline{R(x,y;\lambda)}=R(x,y;\bar\lambda).$$
The following statement holds.
\begin{proposition} \label{resolv} The operator $R(\lambda)$ 
for $\lambda\in\CC\setminus [2,\infty)$ is the resolvent of the operator $H$. In other words, 
$R(\lambda)=(H-\lambda I)^{-1}.$ 
\end{proposition}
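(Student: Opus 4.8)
The plan is to follow the strategy of Remark \ref{H-0-res} for the free operator, replacing the second Jost solution $f_{+}$ by the scattering solution $\vphi$. First I would record the algebraic fact that makes the kernel \eqref{R-kernel-2} the natural candidate: by Lemma \ref{f-properties}(iv) and Lemma \ref{Casorati}, the Casorati determinant of $f_{-}$ and $\vphi$ is the constant
$$C(f_{-},\vphi)(k)=M(k)\,C(f_{-},f_{+})(k)+M(-k)\,C(f_{-},f_{-})(k)=2M(k)\sinh\!\left(\tfrac{\pi i k}{\omega}\right),$$
since $C(f_{-},f_{-})\equiv 0$. Hence the prefactor in \eqref{R-kernel-2} equals exactly $2\omega/C(f_{-},\vphi)(k)$, in perfect analogy with \eqref{R-0-kernel-2}. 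Writing $\theta_{\omega'}(u)=(1-e^{-\pi i u/\omega'})^{-1}$, both singular factors in \eqref{R-kernel-2} are $2\omega'$-periodic in $u=x-y$, and the combination is regular on the diagonal $x=y$ exactly as in the free case.

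The central step is the distributional identity
$$R(x+2\omega'-i0,y;\lambda)+R(x-2\omega'+i0,y;\lambda)+e^{\frac{\pi i x}{\omega}}R(x,y;\lambda)-\lambda R(x,y;\lambda)=\delta(x-y),$$
i.e.\ that $H$ applied in the variable $x$ to the kernel produces $\delta(x-y)$. I would argue this as in Remark \ref{H-0-res}: since by Lemma \ref{phi-properties}(iii) and Lemma \ref{f-properties}(iii) both $f_{-}(\cdot,k)$ and $\vphi(\cdot,k)$ satisfy equation \eqref{ev-H}, and since $\theta_{\omega'}$ is $2\omega'$-periodic, the ``bulk'' contribution of the left-hand side cancels identically and the distribution is supported at $x=y$. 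Its singular part is then extracted from the $\pm i0$ prescriptions using $\theta_{\omega'}(u)\sim \omega'/(\pi i u)$ near $u=0$ and the Sokhotski--Plemelj formula; the Casorati factor arising in the numerator cancels against $C(f_{-},\vphi)(k)$ in the prefactor, and with the normalization $\omega\omega'=-\tfrac14$ the coefficient of $\delta(x-y)$ comes out to be exactly $1$, just as in \eqref{delta}.

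Next I would establish the analytic, as opposed to formal, content. Step one: $R(\lambda)$ is a bounded operator on $L^{2}(\RR)$ for $\lambda\in\CC\setminus[2,\infty)$. This follows from the Schur test once one bounds the kernel using the estimates of Lemma \ref{phi-properties}(iv) and Lemma \ref{f-properties}(v), splitting $\RR^{2}$ according to the sign of $x-y$ so that the decaying factor among $\theta_{\omega'}(\pm(x-y))$ controls the growth of $f_{-}$ as $x\to+\infty$, while the super-exponential decay of $\vphi$ handles the region $x\to+\infty$ and $\im k>0$ gives the decay as $x\to-\infty$. Step two: for $g$ in a dense set, say $g\in C_{c}^{\infty}(\RR)$ or $g\in\cD$, the function $\psi=R(\lambda)g$ lies in $D(H)=D(H_{0})\cap D(V)$. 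Here I would use that $f_{-}(x,k)$ and $\vphi(x,k)$ are entire in $x$ to continue $\psi$ into the strip $|\im z|<2|\omega'|$, and the second set of estimates in Lemma \ref{phi-properties}(iv) and Lemma \ref{f-properties}(v) --- those for the shifted arguments $x\pm2\omega'$ --- to check that $\psi(x+iy)\in L^{2}$ throughout the strip, that the boundary limits $\psi(x\pm2\omega'\mp i0)$ exist in $L^{2}$, and that $e^{\pi i x/\omega}\psi\in L^{2}$. Combining this with the delta-identity yields $(H-\lambda I)\psi=g$; since $g$ ranges over a dense set, $\mathrm{Ran}(H-\lambda I)$ is dense, and the reverse identity $R(\lambda)(H-\lambda I)\phi=\phi$ for $\phi\in\cD$ follows from the same kernel computation. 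Thus $R(\lambda)$ is a two-sided bounded inverse of $H-\lambda I$; applied at $\lambda=\pm i$ this also shows $H$ is essentially self-adjoint on $\cD$, and therefore $R(\lambda)=(H-\lambda I)^{-1}$ for all $\lambda\notin[2,\infty)$.

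The main obstacle I anticipate is not the formal delta-identity, which is a transcription of Remark \ref{H-0-res} with $f_{+}$ replaced by $\vphi$, but the two analytic points in the last paragraph: proving boundedness in the region $x\to+\infty$, where the Jost solution $f_{-}$ grows, and verifying that $\psi=R(\lambda)g$ genuinely satisfies the domain conditions for $H$ --- in particular membership in $D(V)$, i.e.\ $e^{\pi i x/\omega}\psi\in L^{2}$. Both rely decisively on the sharp asymptotics of $\vphi$ and $f_{\pm}$ (and of their $2\omega'$-shifts) from Lemmas \ref{phi-properties} and \ref{f-properties}, and the delicate point is to ensure that the exponential gains and losses in these estimates balance so that the Schur bounds close and the strip estimates stay in $L^{2}$.
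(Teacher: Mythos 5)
Your proposal is correct and follows essentially the same route as the paper: verify the distributional identity \eqref{delta2} by reducing it, via the constancy of the Casorati determinant $C(f_{-},\vphi)=2\sinh\!\left(\tfrac{\pi ik}{\omega}\right)M(k)$ (Lemma \ref{Casorati} together with Lemma \ref{f-properties}(iv)) and the Sokhotski--Plemelj formula, to the free computation of Remark \ref{H-0-res}, and then establish boundedness of $R(\lambda)$ from the pointwise bound $|R(x,y;\lambda)|\leq Ce^{-2\pi\kappa|x-y|}$ obtained by the same case analysis on the sign and position of $x-y$ using the estimates of Lemmas \ref{phi-properties}(iv) and \ref{f-properties}(v). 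The only differences are expository: you spell out the domain verification $\psi=R(\lambda)g\in D(H)$ and the essential self-adjointness consequence in slightly more detail than the paper, which delegates these points to the analogy with Remark \ref{H-0-res}.
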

\begin{proof} As in the case of the operator $H_{0}$ (see Remark \ref{H-0-res}), the statement that for smooth function $g(x)$ with compact support the function
$$\psi(x)=\int_{-\infty}^{\infty}R(x,y;\lambda)g(y)dy\in D(H)$$
and satisfies equation $(H-\lambda I)\psi=g$, reduces to the verification of the following equation
\begin{equation} \label{delta2}
R(x+2\omega'-i0,y;\lambda)+R(x-2\omega'+i0,y;\lambda)+(e^{\frac{\pi i x}{\omega}}-\lambda)R(x,y;\lambda)=\delta(x-y),
\end{equation}
understood in the distributional sense. As in Remark \ref{H-0-res}, the functions $\vphi(x,k)$ and $f_{-}(x,k)$ satisfy 
equation  \eqref{ev-H}, so that the distribution in the left hand side of equation \eqref{delta2} has support only at $x=y$, and its singular part coincides with the singular part of the function 
\begin{gather*}
-\frac{\omega\omega'}{\pi \sinh\left(\frac{\pi i k}{\omega}\right)M(k) }\left(\frac{f_{-}(x+2\omega',k)\vphi(y,k)-f_{-}(y,k)\vphi(x+2\omega',k)}{x-y-i0} \right.\\
\left.+\frac{f_{-}(x-2\omega',k)\vphi(y,k)+f_{-}(y,k)\vphi(x+2\omega',k)}{x-y+i0}\right)
\end{gather*}
in the neighborhood of $x=y$. As in the derivation in Remark \ref{H-0-res}, equation \eqref{delta2}  follows from the formula 
$$C(f_{-},\vphi)(x,k)=2\sinh\left(\frac{\pi i k}{\omega}\right)M(k),$$
which, in turn, follows from Lemma  \ref{Casorati}, and from the Sokhotski-Plemelj formula.

It remains to show that the kernel  \eqref{R-kernel-2} when $\lambda\in\CC\setminus [2,\infty)$ defines a bounded operator on в $L^{2}(\RR)$. This immediately follows from the estimate
$$|R(x,y;\lambda)|\leq Ce^{-2\pi\kappa|x-y|},\quad\kappa=\im k,$$
which is a consequence of the estimates in Lemmas  \ref{phi-properties} and \ref{f-properties}. Indeed, since $R(x,y;\lambda)=R(y,x;\lambda)$, it is sufficient to assume that  $y\leq x$. Let us rewrite \eqref{R-kernel-2} in the form
$$R(x,y;\lambda)=\frac{\omega\left(f_{-}(x,k)\vphi(y,k)e^{2\pi i\omega(x-y)} - f_{-}(y,k)\vphi(x,k)e^{-2\pi i\omega(x-y)} \right)}{2\sinh 2\pi i\omega(x-y)\sinh\left(\frac{\pi i k}{\omega}\right)M(k)}$$
and consider first the case $0\leq y\leq x$. Using the estimates from part (iv) of Lemma \ref{phi-properties} and  part (v) of Lemma  \ref{f-properties}, we get
\begin{align*}
|R(x,y;\lambda)|\leq Ce^{-2\pi|\omega|(x-y)}\left(e^{2\pi(|\omega|-|\omega'|)x}e^{-2\pi(|\omega|+|\omega'|)y}e^{-2\pi|\omega|(x-y)}+\right.\\
\left.+e^{-2\pi(|\omega|+|\omega'|)x}e^{-2\pi\kappa y}e^{2\pi|\omega|(x-y)}\right)
\leq 2Ce^{-2\pi|\omega|(x-y)}.
\end{align*} 
For the case $y<0\leq x$ we have
\begin{align*}
|R(x,y;\lambda)|\leq Ce^{-2\pi|\omega|(x-y)}\left(e^{2\pi(|\omega|-|\omega'|) x}e^{-2\pi\kappa y}e^{-2\pi|\omega|(x-y)}+\right.\\
\left.+e^{-2\pi(|\omega|+|\omega'|)x}e^{2\pi\kappa y}e^{2\pi|\omega|(x-y)}\right)\\
\leq C\left(e^{2\pi(|\omega|-\kappa)y}e^{-2\pi|\omega|(x-y)}+e^{-2\pi|\omega|x}e^{2\pi\kappa y}\right)\leq 2Ce^{-2\pi\kappa(x-y)}.
\end{align*} 
And, finally, in the remaining case  $y\leq x<0$ we have the estimate
\begin{gather*}
|R(x,y;\lambda)|\leq Ce^{-2\pi|\omega|(x-y)}\left(e^{2\pi\kappa x}e^{-2\pi\kappa y}e^{-2\pi|\omega|(x-y)}+\right.\\
\left.+e^{-2\pi\kappa x}e^{2\pi\kappa y}e^{2\pi|\omega|(x-y)}\right)\leq 2Ce^{-2\pi\kappa(x-y)}.
\qedhere
\end{gather*} 
\end{proof}
\begin{remark}
Formula \eqref{R-kernel-2} can be also used as a definition of the operator $H$. 
\end{remark}

\subsection{The eigenfunction expansion} \label{expansion}
Explicit formula  \eqref{R-kernel-2} for the resolvent $R(\lambda)$ immediately leads to the eigenfunction expansion theorem for the operator $H$. Namely, denote by $E(\Delta)$, where $\Delta$ is a Borel subset of  $\RR$, the resulution of the identity for the self-adjoint operator  $H$ (see \cite{AG, DS}). When there is no point spectrum we have the formula
\begin{equation*}
E(\Delta) =\lim_{\varepsilon\rightarrow 0^{+}}\frac{1}{2\pi i}\int_{\Delta}\left(R(\lambda+i\varepsilon)-R(\lambda-i\varepsilon)\right)d\lambda
\end{equation*}
(see \cite[Ch.~XII]{DS}), sometimes called the Stone's formula.
In particular, putting  $\Delta=[2,\infty)$, for the operator $H$ we get
\begin{equation} \label{Stone}
I =\lim_{\varepsilon\rightarrow 0^{+}}\frac{1}{2\pi i}\int_{2}^{\infty}\left(R(\lambda+i\varepsilon)-R(\lambda-i\varepsilon)\right)d\lambda.
\end{equation}
It is this formula that serves as a base for the derivation of the eigenfunction expansion theorem. 
\begin{theorem} \label{theorem-1} The following statements hold.
\begin{itemize}
\item[(i)]
The operator $\cU$, given by the formula
$$(\cU\psi)(k)=\int_{-\infty}^{\infty}\psi(x)\varphi(x,k)dx,$$
isometrically maps $L^{2}(\RR)$ onto the Hilbert space $\cH_{0}=L^{2}([0,\infty), \rho(k)dk)$ with the spectral function 
$$\rho(k)=\frac{1}{|M(k)|^{2}}=4\sinh\!\left(\frac{\pi i k}{\omega}\right)\sinh\!\left(\frac{\pi i k}{\omega'}\right).$$
In other words,  $\cU: L^{2}(\RR)\rightarrow \cH_{0}$ and wherein
$$\cU^{*}\cU=I\quad\text{and}\quad \cU\cU^{*}=I_{0},$$
where $I_{0}$ is the identity operator on $\cH_{0}$.
\item[(ii)] The operator $\cU H\cU^{-1}$ is a multiplication by the function $2\cosh\!\left(\frac{\pi i k}{\omega}\right)$ operator on $\cH_{0}$, so that the operator $H$ has simple absolutely continuous spectrum filling $[2,\infty)$. 
\end{itemize}
\end{theorem}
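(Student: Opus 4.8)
The plan is to insert the explicit resolvent kernel \eqref{R-kernel-2} into Stone's formula \eqref{Stone} and read off the spectral data from the jump of $R(x,y;\lambda)$ across the cut $[2,\infty)$. In the parametrization $\lambda=2\cosh(\tfrac{\pi ik}{\omega})$ the resolvent set is glued to the physical strip $0<\im k\le|\omega|$, and a short inspection of $\im\lambda=2\sinh(\tfrac{\pi\,\mathrm{Re}\,k}{|\omega|})\sin(\tfrac{\pi\,\im k}{|\omega|})$ shows that for $\lambda>2$ the upper edge $\lambda+i0$ is reached through a boundary value $k>0$, while the lower edge $\lambda-i0$ is reached through $-k$. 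Thus the jump of the kernel is $R[k]-R[-k]$, where $R[k]$ denotes \eqref{R-kernel-2} evaluated at spectral parameter $k$.

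First I would compute this jump algebraically. Using $f_{-}(x,-k)=f_{+}(x,k)$, the evenness $\vphi(x,-k)=\vphi(x,k)$ for real $k$, the oddness of $\sinh(\tfrac{\pi ik}{\omega})$ in $k$, and $M(-k)=\overline{M(k)}$ from Lemma \ref{phi-properties}, the two kernels combine. The decisive step is the relation $\vphi=M(k)f_{+}+M(-k)f_{-}$ of Lemma \ref{f-properties}(iv), which turns $\tfrac{f_{-}(x,k)}{M(k)}+\tfrac{f_{+}(x,k)}{M(-k)}$ into $\tfrac{\vphi(x,k)}{M(k)M(-k)}$; the two $(x-y)$-dependent denominators then collapse through the identity $\tfrac{1}{1-e^{w}}+\tfrac{1}{1-e^{-w}}=1$ with $w=\tfrac{\pi i}{\omega'}(x-y)$. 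Since $M(k)M(-k)=|M(k)|^{2}=\rho(k)^{-1}$, the result is
\[
R(x,y;\lambda+i0)-R(x,y;\lambda-i0)=4\omega\sinh\!\left(\tfrac{\pi ik}{\omega'}\right)\vphi(x,k)\vphi(y,k).
\]
Substituting into \eqref{Stone} and changing the variable to $k\in[0,\infty)$ via $d\lambda=\tfrac{2\pi i}{\omega}\sinh(\tfrac{\pi ik}{\omega})\,dk$, all constants cancel and the density $\rho(k)=4\sinh(\tfrac{\pi ik}{\omega})\sinh(\tfrac{\pi ik}{\omega'})$ appears, giving the completeness relation $\delta(x-y)=\int_{0}^{\infty}\rho(k)\vphi(x,k)\vphi(y,k)\,dk$. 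As $\vphi(x,k)$ is real for real $x,k$ (Lemma \ref{phi-properties}(ii)), this is precisely $\cU^{*}\cU=I$, so $\cU$ is an isometry of $L^{2}(\RR)$ into $\cH_{0}$.

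To upgrade this to the surjectivity statement $\cU\cU^{*}=I_{0}$ I would establish the dual orthogonality relation $\int_{-\infty}^{\infty}\vphi(x,k)\vphi(x,k')\,dx=\rho(k)^{-1}\delta(k-k')$ for $k,k'\ge 0$. This is read off from the $x\to-\infty$ asymptotics $\vphi(x,k)=M(k)e^{2\pi ikx}+M(-k)e^{-2\pi ikx}+o(1)$ of Lemma \ref{phi-properties}(i): the oscillatory exponentials produce the Dirac mass with weight $|M(k)|^{2}=\rho(k)^{-1}$, while the tail as $x\to+\infty$ is rapidly decaying by Lemma \ref{phi-properties}(iv) and contributes only a smooth kernel, hence no singularity. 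A direct substitution then yields $\cU\cU^{*}=I_{0}$. Part (ii) is now immediate: Lemma \ref{phi-properties}(iii) gives $H\vphi(\cdot,k)=2\cosh(\tfrac{\pi ik}{\omega})\vphi(\cdot,k)$, and moving $H$ onto $\vphi$ inside $(\cU H\psi)(k)=\int(H\psi)(x)\vphi(x,k)\,dx$ by the symmetry of $H$ on the core $\cD$ shows that $\cU H\cU^{-1}$ is multiplication by $2\cosh(\tfrac{\pi ik}{\omega})$; the absolute continuity and simplicity of the spectrum on $[2,\infty)$ then follow from the structure of $\cH_{0}$.

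The algebraic jump computation is clean, so the real difficulty is analytic. The main obstacle is the rigorous justification of the limit $\varepsilon\to0^{+}$ in \eqref{Stone}: the uniform bound $|R(x,y;\lambda)|\le Ce^{-2\pi\kappa|x-y|}$ from Proposition \ref{resolv} degenerates as $\kappa=\im k\to 0$, so one must control the boundary values of the kernel and justify exchanging the limit with the $\lambda$-integration, testing against a dense class of $\psi$ (say smooth compactly supported functions, or the core $\cD$) and reading the identities distributionally. A second delicate point is the orthogonality relation, where one must verify that only the oscillatory part of the asymptotics of $\vphi$ contributes to the singular support at $k=k'$; this is the functional-difference analogue of the classical normalization of scattering states and rests on the sharp asymptotics of Lemma \ref{phi-properties} together with a Riemann--Lebesgue argument.
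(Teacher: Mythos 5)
Your argument for the completeness half of (i) is exactly the paper's: apply Stone's formula \eqref{Stone}, identify $\lambda\pm i0$ with $\pm k$, and compute the jump of the kernel \eqref{R-kernel-2} using Lemma \ref{f-properties}(iv); your explicit value $4\omega\sinh(\tfrac{\pi ik}{\omega'})\vphi(x,k)\vphi(y,k)$ agrees with the paper's $\tfrac{\omega}{\sinh(\pi ik/\omega)}\vphi(x,k)\vphi(y,k)\rho(k)$, and the change of variable $d\lambda=\tfrac{2\pi i}{\omega}\sinh(\tfrac{\pi ik}{\omega})dk$ then yields \eqref{completeness} and $\cU^{*}\cU=I$. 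Where you genuinely diverge is the surjectivity $\cU\cU^{*}=I_{0}$: you propose to prove the dual orthogonality relation $\int\vphi(x,k)\vphi(x,k')dx=\rho(k)^{-1}\delta(k-k')$ directly from the $x\to-\infty$ asymptotics, whereas the paper avoids this entirely by an abstract commutant argument --- the intertwining $\cU R(\lambda)=\hat R(\lambda)\cU$ makes $\im\cU$ invariant under the resolvent of the multiplication operator $\hat H$, so the projection onto $\im\cU$ is multiplication by a characteristic function $\chi_{\Delta}$, and $\Delta=[0,\infty)$ because $\vphi(\cdot,k)\not\equiv 0$ for each $k>0$. Your route is viable (the paper itself carries it out in the remark following the theorem, in the distributional sense), but to make it rigorous you should not merely substitute the asymptotics into the integral: the efficient device is the Casorati determinant identity $C(x)-C(x-2\omega')=(\lambda-\mu)\vphi(x,k)\vphi(x,l)$ integrated over a period rectangle, which reduces $\int_{-N}^{N}\vphi(x,k)\vphi(x,l)dx$ to boundary terms at $\pm N$ controlled by Lemmas \ref{phi-properties}(iv) and \ref{f-properties}, after which the $\tfrac{\sin 2\pi(k-l)N}{k-l}\to\pi\delta(k-l)$ limit and Riemann--Lebesgue finish it. The trade-off is clear: the paper's argument is shorter and needs no new estimates, while yours produces the explicit orthogonality relation as a by-product. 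Part (ii) is handled identically in both. Your flagged analytic concern about interchanging the $\varepsilon\to 0^{+}$ limit with the $\lambda$-integration is legitimate; the paper addresses it only by restricting to $\psi\in\cD$ and noting the rapid decay of $(\cU\psi)(k)$.
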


\begin{proof} We will show that  for each $\psi(x)\in\cD$ the following identity holds
\begin{equation} \label{completeness}
\psi(x)=\int_{0}^{\infty}\left(\int_{-\infty}^{\infty}\psi(y)\vphi(y,k)dy\right)\vphi(x,k)\rho(k)dk.
\end{equation}
Using equation  \eqref{ev-H} for $\vphi(x,k)$ we get that $(\cU\psi)(k)$ as $k\rightarrow\infty$ decays faster than any power of $e^{-\frac{\pi i k}{\omega}}$, so that all integrals are obviously absolutely convergent. Formula \eqref{completeness} can be proved by the method of complex integration as in  \cite[\S 2]{F59} (see also \cite[Ch.~3]{T1}), as well as by applying formula  \eqref{Stone}, which is what we will using here. Namely, apply \eqref{Stone} to the function $\psi(x)\in\cD$. Explicitly computing the jump of the resolvent kernel $R(x,y;\lambda)$ across the continuous spectrum using relation (iv) of Lemma  \eqref{f-properties}, we have
\begin{align*}
R(x,y;\lambda+i0)-R(x,y;\lambda-i0) & =\frac{\omega}{\sinh\left(\frac{\pi i k}{\omega}\right)}\frac{\varphi(x,k)\varphi(y,k)}{|M(k)|^{2}}\\
&=\frac{\omega}{\sinh\left(\frac{\pi i k}{\omega}\right)}\vphi(x,k)\vphi(y,k)\rho(k),
\end{align*}
where it was used that the case $\lambda +i0$ corresponds to the variable $k>0$, and the case $\lambda-i0$ --- to the variable $-k$.
Using $d\lambda=\frac{2\pi i}{\omega}\sinh\left(\frac{\pi i k}{\omega}\right)dk$, we arrive at \eqref{completeness}, and by multiplying it by $\overline{\psi(x)}$ and integrating, we obtain
$$\|\psi\|_{L^{2}(\RR)}^{2}=\|\cU\psi\|_{\cH_{0}}^{2}.$$
(the change of order of integration is legitimate because of the Fubini theorem). Thus the operator $\cU$, defined on a dense linear subset $\cD\subset L^{2}(\RR)$, maps it into the Hilbert space $\cH_{0}$ and is an isometry. Therefore $\cU$ admits an isometric extension to the whole  $L^{2}(\RR)$, which proves the  \emph{completeness relation} 
$$\cU^{*}\cU=I.$$

The \emph{orthogonality relation} 
$$\cU\cU^{*}=I_{0}$$
is equivalent to the statement that the image of the operator $\cU$ in $\cH_{0}$, the closed subspace $\im\cU$, coincides with $\cH_{0}$.
This is proved in the standard way (see \emph{e.g.}, \cite[Ch.~3]{T1}). Namely, on the domain $\cD$ we have $\cU (H-\lambda I)=(\hat{H}-\lambda I) \cU$, where $\hat{H}$ is the multiplication by  $2\cosh(\frac{\pi ik}{\omega})$ operator on $\cH_{0}$. From here we get
$$\cU R(\lambda) =\hat{R}(\lambda)\cU,$$
where $\hat{R}(\lambda)$ is the resolvent of the operator $\hat{H}$. Thus $\im\cU$ is an invariant subspace for the operator  $\hat{R}(\lambda)$ for all $\lambda\in\CC\setminus [2,\infty)$. Therefore, $\hat{R}(\lambda)$ commutes with the orthogonal projection operator  $P$ onto the subspace $\im\cU$. Hence $P$ is a function of the operator $\hat{H}$ which, in turn, is the function of of the multiplication operator by the variable $k$ on $\cH_{0}$. Thus we obtain that $P$ is a multiplication operator by a characteristic function $\chi_{\Delta}$ of some Borel subset  $\Delta$ in $[0,\infty)$. On the other hand, if for some $k>0$
$$\int_{-\infty}^{\infty}\psi(x)\varphi(x,k)dx=0$$
for all $\psi(x)\in C_{0}(\RR)$, then $\vphi(x,k)=0$ for all $x$, so that necessarily $\Delta=[0,\infty)$. 

This completes the proof of (i). The part (ii) follows from the above arguments. 
\end{proof}

\begin{remark} In the physics literature the completeness and orthogonality relations, understood in the distributional sense, are written as follows 
\begin{equation*}
\int_{0}^{\infty}\vphi(x,k)\vphi(y,k)\rho(k)dk=\delta(x-y)
\end{equation*}
and
\begin{equation*}
\int_{-\infty}^{\infty}\vphi(x,k)\vphi(x,l)dx=\frac{1}{\rho(k)}\delta(k-l), \quad k,l>0.
\end{equation*}
As in the case of one dimensional Schr\"{o}dinger equation (see \emph{e.g.}, \cite[Ch.~3]{T1}), the last relation can be proved directly by using the Casorati determinant 
Namely, put $C(x)=C(\vphi(x,k),\vphi(x,l))$ and integrate this function over the contour $D$ on Fig. 3.

\vspace{5mm}
\begin{tikzpicture}[thick, scale=1.2]
\draw (-4,0) -- (4,0);
\draw (4,0) -- (4,-1);
\draw (4,-1) -- (-4,-1);
\draw (-4,-1) -- (-4,0);
\node at (0,.5) {$D$};
\node at (-4,.3) {$-N$};
\node at (0,0) {$>$};
\node at (0,-1) {$<$};
\node at (-4,-.5) {$\wedge$};
\node at (4,-.5) {$\vee$};
\node at (4,.3) {$N$};
\node at (-4,-1.3) {$-N-2\omega'$};
\node at (4,-1.3) {$N-2\omega'$};
\node at (0,-2) {$\text{Fig. 3}$};
\end{tikzpicture}

\noindent
By Cauchy theorem,
$$\int\limits_{D}C(x)dx=0.$$
On the other hand, using the formula
$$C(x)-C(x-2\omega')=(\lambda-\mu)\vphi(x,k)\vphi(x,l),$$
where $\lambda=2\cosh\!\left(\frac{\pi i k}{\omega}\right)$ and $\mu=2\cosh\!\left(\frac{\pi i l}{\omega}\right)$, we get the equation
$$\int_{-N}^{N}\vphi(x,k)\vphi(x,l)dx= \frac{1}{(\lambda-\mu)}\left(\int_{N-2\omega'}^{N}C(x)dx - \int_{-N-2\omega'}^{-N}C(x)dx\right).$$
It follows form the estimates (iv) in Lemma \ref{phi-properties} that the first integral is exponentially decaying as $N\rightarrow\infty$. Using   (i) and  (iv) in Lemma \ref{f-properties}, well-known formula from the distribution theory 
$$\lim_{N\rightarrow\infty}\frac{\sin 2\pi(k-l)N}{k-l}=\pi\delta(k-l),$$
and the Riemann-Lebesgue lemma, we obtain
$$\lim_{N\rightarrow\infty} \frac{1}{(\mu-\lambda)}\int_{-N-2\omega'}^{-N}C(x)dx=\frac{1}{\rho(k)}\delta(k-l).$$
\end{remark}

\begin{remark} \label{K-L} Because $W(J_{\nu},K_{\nu})=-1$ (see Remark \ref{I-bessel}), the resolvent kernel  $\tilde{R}(\lambda)$ of the operator $\displaystyle{\tilde{H}=-\frac{d^{2}}{dx^{2}}+e^{2x}}$
has the form
$$\tilde{R}(x,y;\lambda)=\frac{1}{2ik\tilde{M}(k)}(\tilde{f}_{-}(x,k)\tilde{\vphi}(y,k)\theta(y-x)+\tilde{f}_{-}(y,k)\tilde{\vphi}(x,k)\theta(x-y)),$$
where $\im k>0$ (see Remark \ref{schrodinger}).
As in Theorem \ref{theorem-1}, we get that the operator $\tilde{\cU}$, given by the formula
$$(\tilde{\cU}\psi)(k)=\int_{-\infty}^{\infty}\psi(x)\tilde{\varphi}(x,k)dx,$$
isometrically maps $L^{2}(\RR)$ onto $\tilde{\cH}_{0}=L^{2}([0,\infty),\tilde{\rho}(k)dk)$,  where
$$\tilde{\rho}(k)=\frac{2\pi}{|\tilde{M}(k)|^{2}}=\frac{2k\sinh\pi k}{\pi^{2}}.$$
Here the operator $\tilde{\cU}\tilde{H}\tilde{\cU}^{-1}$ is a multiplication operator by $k^{2}$ on $\tilde{\cH}_{0}$. The formulas
$$\tilde{\psi}(k)=\int_{-\infty}^{\infty}\psi(x)K_{ik}(e^{x})dx$$
and
$$\psi(x)=\frac{2}{\pi^{2}}\int_{0}^{\infty}\tilde{\psi}(k)K_{ik}(e^{x})k\sinh\pi kdk,$$
after the change of variable $x=\ln y$, 
are known in the theory of special functions as Kontorovich-Lebedev transform and its inverse (see~\cite[Ch.~XII]{BE2}), and the equality 
$$\int_{-\infty}^{\infty}|\psi(x)|^{2}dx=\frac{2}{\pi^{2}}\int_{0}^{\infty}|\tilde{\psi}(k)|^{2}k\sinh\pi kdk$$
--- as the Parseval theorem. The eigenfunction expansion theorem for the operator $\tilde{H}$ gives a spectral interpretation of the Kontorovich-Lebedev transform, so that Theorem  \ref{theorem-1} may be considered as a  $q$-analog of this transformation.
\end{remark}

\subsection{The scattering theory} \label{scattering}  Here we briefly outline the scattering theory for the operator $H$.
Put
$$\vphi^{(+)}(x,k)=\frac{1}{M(k)}\vphi(x,k).$$
We have as  $x\rightarrow-\infty$,
$$\vphi^{(+)}(x,k)=e^{2\pi ikx} + S(k)e^{-2\pi ikx} + o(1),$$
where
$$S(k)=\frac{M(-k)}{M(k)}=e^{-4\pi i\omega''k}\,\frac{\gamma(-2k-\omega'')}{\gamma(2k-\omega'')}.$$
According to the stationary scattering theory (see \cite{F59,F74}), multiplication by the function $S(k)$ operator plays the role of the scattering operator on the space $\cH_{0}$ as well as on $L^{2}([0,\infty))$. Note that the operator $\cU^{(+)}$, defined as 
$$(\cU^{(+)}\psi)(k)=\int_{-\infty}^{\infty}\psi(x)\vphi^{(+)}(x,k)dx,$$
isometrically maps $L^{2}(\RR)$ onto $L^{2}([0,\infty))$. As in \cite{FD, Faddeev-2}, it is convenient to interpret the latter space as a subspace in $L^{2}(\RR)$ of functions $\chi(k)$ satisfying
$$\chi(k)=S(k)\chi(-k).$$
The operator  $\cU^{(-)}$ is defined similarly by using the solution $\vphi^{(-)}(x,k)=\overline{\vphi^{(+)}(x,k)}$. The operators $\cU^{(\pm)}$ are used for the non stationary approach to the scattering theory (see~\cite{F59,F74}), and we leave its formulation to the reader. 
\begin{remark} In a similar way one formulates the scattering theory for the operator $\tilde{H}$. Here for the scattering operator $\tilde{S}$ we have
$$\tilde{S}(k)=\frac{\tilde{M}(-k)}{\tilde{M}(k)}=-2^{2ik}\frac{\Gamma(1+ik)}{\Gamma(1-ik)}$$
(cf. formula (5.19) in \cite{ZZ-1}).
\end{remark}


\begin{thebibliography}{99}
\bibitem[1]{F59} L.D. Faddeev, ``Inverse problem of quantum scattering theory'', \emph{Uspekhi Matem. Nauk}, \textbf{14}:4(88) (1959), 57--119 (Russian); English translation in \emph{J. Math. Phys.} \textbf{4}:1 (1963), 72--104.
\bibitem[2]{F74} L.D. Faddeev, ``Inverse problem of quantum scattering theory. II'', \emph{Itogi Nauki i Tekhniki. Sovremennye Problemy Matematiki}, \textbf{3}, VINITI, Moscow, (1974), 93--180 (Russian); English translation in \emph{J. of Soviet Math.}, \textbf{5}:3 (1976), 334--396.
\bibitem[3]{BPZ} A.A. Belavin, A.M. Polyakov, A.B. Zamolodchikov, ``Infinite conformal symmetry in two-dimensional quantum field theory'', \emph{Nucl. Phys. B}, \textbf{241}:2 (1984), 333--380.
\bibitem[4]{FT1} L.D. Faddeev and L.A. Takhtajan, ``Liouville model on the lattice'', {\em Field theory, quantum gravity and strings ({M}eudon/{P}aris,1984/1985)},  {\em Lecture Notes in Phys.}, \textbf{246} (1986), 166--179.
\bibitem[5]{FC} V.V. Fock, L.O. Chekhov ``A quantum Teichm\"{u}ller space'', \emph{Teoret. and Math. Phys}, \textbf{120}:3 (1999), 511--528 (Russian); English translation in \emph{Theor. Math. Phys.}, \textbf{120}:3 (1999), 1245--1259.
\bibitem[6]{K1} R.~Kashaev, ``The quantum dilogarithm and {D}ehn twists in quantum {T}eichm\"uller theory'', {\em Integrable structures of exactly solvable two-dimensional models of quantum field theory ({K}iev, 2000)}, {\em NATO Sci. Ser. II Math. Phys. Chem.}, \textbf{35} (2001), 211--221. 
\bibitem[7]{PT} B.~Ponsot and J.~Teschner, ``Clebsch-Gordan and Racah-Wigner coefficients for a continuous series of representations of $U_{q}(\mathrm{sl}(2,\RR))$'', \emph{Commun. Math. Phys.}, \textbf{224}:3 (2001), 613--655.
\bibitem[8]{FD} S.E. Derkachov, L.D. Faddeev, ``$3j$-symbol for the modular double of $\mathrm{SL}_{q}(2,\RR)$ revisited'', arXiv:13025400, 2013.
\bibitem[9]{T1} Leon A. Takhtajan, \emph{Quantum mechanics for mathematicians}, Graduate Studies in Mathematics \textbf{95}, Amer. Math. Soc., Providence, RI,  2008.
\bibitem[10]{ZZ-1} A.B. Zamolodchikov, Al.B. Zamolodchikov, ``Conformal bootstrap in Liouville field theory'', \emph{Nuclear Phys. B},  \textbf{477}:2 (1996), 577--605.
\bibitem[11]{Faddeev-1}L.D. Faddeev, ``Modular double of a quantum group'', \emph{Conf\'{e}rence Mosh\'{e} Flato 1999, Vol. I (Dijon)}, 149--156, \emph{Math. Phys. Stud.}, \textbf{21}, Kluwer, 2000.
\bibitem[12]{Sh} T. Shintani, ``On a Kronecker limit formula for real quadratic field'', \emph{J. Fac. Sci. Univ. Tokyo Sect. IA Math.}, \textbf{24}:1 (1977), 167--199.
\bibitem[13]{Kurokawa} N. Kurokawa, ``Multiple sine functions and Selberg zeta functions'', \emph{Proc. Japan Acad. Ser. A Math. Sci.}, \textbf{67}:3 (1991), 61--64.
\bibitem[14]{Ruij} S.N.M. Ruijsenaars, ``First order analytic difference equations and integrable quantum systems'', \emph{J. Math. Phys.}, \textbf{38}:2 (1997), 1069--1146.
\bibitem[15]{ZZ-2} Alexander B. Zamolodchikov, Alexey B. Zamolodchikov,  ``Factorized $S$-matrices in two dimensions as the exact solutions of certain relativistic quantum field theory models``,'' \emph{Ann. Physics}, \textbf{120}:2 (1979), 253--291.
\bibitem[16]{Smirnov} F.A. Smirnov, \emph{Form Factors in Completely Integrable Models of Quantum Field Theory}, World Scientific, 1992.
\bibitem[17]{Barnes} E.W. Barnes, ``Genesis of the double gamma function'', \emph{Proc. London Math. Soc.}, \textbf{31} (1899) 358-381.
\bibitem[18]{Al} V.P. Alekseevksi, ``On functions similar to the function gamma'', \emph{Proceedings of the Kharkov Math. Society}, (2) \textbf{1} (1889), 169-238 (in Russian).
\bibitem[19]{Volkov} Alexandre Y. Volkov, ``Noncommutative hypergeometry'', \emph{Commun. Math. Phys.}, \textbf{258}:2 (2005), 257-273.
\bibitem[20]{BE} H. Bateman and A. Erd\'{e}lei, \emph{Higher transcendental functions}, Vol. 2, McGraw-Hill, New York, 1953.
\bibitem[21]{AG} N.I. Akhiezer, I.M. Glazman, \emph{Theory of linear operators in Hilbert space}, Dover, 1993.
\bibitem[22]{DS} N. Danford and J.T. Schwartz, \emph{Linear operators Part II Spectral theory}, Wiley, 1988.
\bibitem[23]{BE2} H. Bateman and A. Erd\'{e}lei, \emph{Tables of integral transforms}, Vol. 2, McGraw-Hill, New York, 1954.
\bibitem[24]{Faddeev-2} L.D. Faddeev, ``Zero modes for the quantum Liouville model'', arXiv: 1404.1713, 2014.
\end{thebibliography}
\end{document}